\documentclass{article}

\usepackage{arxiv}

\usepackage[utf8]{inputenc} 
\usepackage[T1]{fontenc}    
\usepackage{hyperref}       
\usepackage{url}            
\usepackage{booktabs}       
\usepackage{amsmath,amssymb,amsthm,mathrsfs}       
\usepackage{nicefrac}       
\usepackage{microtype}      
\usepackage{graphicx}
\usepackage{natbib}
\usepackage{doi}

\usepackage{lettrine}
\usepackage[linesnumbered,ruled,vlined]{algorithm2e}
\usepackage{subcaption}
\usepackage{multirow}
\SetKwInput{KwDef}{Define}               
\SetKwInput{KwInput}{Input} 
\newtheorem{theorem}{Theorem}
\newtheorem{corollary}{Corollary}[theorem] 

\title{Systems of ODEs Parameters Estimation by Using Stochastic Newton-Raphson and Gradient Descent Methods}


\author{ \href{https://orcid.org/0000-0003-4526-8265}{\includegraphics[scale=0.06]{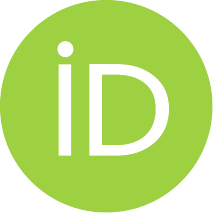}\hspace{1mm}S.~Syafiie}\thanks{corresponding author} \\
	Department of Chemical \\and Materials Engineering,\\
	Faculty of Engineering - Rabigh,\\
	King Abdulaziz University,\\ Jeddah, Kingdom of Saudi Arabia \\
	\texttt{s.syafiie@gmail.com} \\
	\And
	\href{https://orcid.org/0009-0008-1996-5124}{\includegraphics[scale=0.06]{orcid.pdf}\hspace{1mm}Aries Subiantoro} \\
	Department of Electrical Engineering,\\
	Faculty of Engineering,\\
	Universitas Indonesia,\\ Depok - Jawa Barat, Indonesia\\
	\texttt{aries.subiantoro@ui.ac.id} \\
	 \AND
	 \href{https://orcid.org/0000-0001-7627-8875}{\includegraphics[scale=0.06]{orcid.pdf}\hspace{1mm} Vivi Andasari} \\
	 Department of Electrical Engineering,\\
	Faculty of Engineering,\\
	Universitas Indonesia,\\ Depok - Jawa Barat, Indonesia\\
	 \texttt{andasarivivi@gmail.com} \\
	 \And
	 \href{https://orcid.org/0000-0002-4046-3136}{\includegraphics[scale=0.06]{orcid.pdf}\hspace{1mm} Fernando Tadeo }\\
	 Departamento de Ingenieria \\de Sistemas y Automatica,\\ Universidad de Valladolid,\\ Valladolid, Spain \\
	 \texttt{fernando@autom.uva.es} \\
}

\hypersetup{
pdftitle={A template for the arxiv style},
pdfsubject={q-bio.NC, q-bio.QM},
pdfauthor={David S.~Hippocampus, Elias D.~Striatum},
pdfkeywords={First keyword, Second keyword, More},
}

\begin{document}
\maketitle

\begin{abstract}
	Ordinary differential equations (ODEs) are widely used to describe the time evolution of natural phenomena across various scientific fields. Estimating the parameters of these systems from data is a challenging task, particularly when dealing with nonlinear and high-dimensional models. In this paper, we propose novel methodologies for parameter estimation in systems of ODEs by using the Newton-Raphson (NR) method and Gradient Descent (GD) method. By leveraging the discrete derivative and Taylor expansion, the problem is formulated in a way that enables the application of both methods, allowing for flexible, efficient solutions. Additionally, we extend these approaches to stochastic versions — Stochastic Newton-Raphson (SNR) and Stochastic Gradient Descent (SGD) — to handle large-scale systems with reduced computational cost. The proposed methods are evaluated by using numerical examples, including both linear and nonlinear parameter models, and compare the results to the well-known Nonlinear Least Squares (NLS) method. While NR converges rapidly to the optimal solution, GD demonstrates robustness in handling chaotic systems, though it may occasionally lead to suboptimal results. Overall, the proposed methods provide improved accuracy in parameter estimation for ODE systems, outperforming NLS in terms of error metrics such as bias, mean absolute error (MAE),  mean absolute percentage error (MAPE), root mean square error (RMSE), and  coefficient of determination R². These methods offer a valuable tool for fitting ODE models, particularly in scenarios involving big data and complex dynamics.
\end{abstract}

\keywords{System of ODEs \and Data-fitting \and S tochastic Newton-Raphson \and Stochastic Gradient descent \and Nonlinear systems \and Parameters estimation}

\section{Introduction}
\lettrine[nindent=0em,lines=3]{A}{}
well presented physical phenomenon mathematically is described by an ODE or a systems of ODEs. Thus, ODEs are a very important powerful tool to formulate, study and analyze physical behavior of a system. The ODEs are extensively employed in science and technology for example in physics, chemistry, biology, mechanics, astronomy, economics, etc. The famous ODE is due the objective laws which governs certain phenomena can be described as an ODE or a system of ODEs. Furthermore, the ODEs solution is given a better future prediction.\\
ODE is an equation having only one instantaneous derivative variable which respects to a single independent variable. Whereas partial differential equation (PDE) is an equation having a single dependent variable which respect to several independent variables. Most physical systems may experience that the previous dependent variable affects the current measurement of dependent variables. This experiencing phenomenon is well described in  a delay differential equation (DDE). \\
Also, ODE provides a direct representation of a physical system evolution while allowing the size sample step in irregular space. In an interest for experimental data collection, the sampling interval may vary for each collecting time. Therefore, this article aims to fit a given data set to a system of ODEs model.\\
A curve fitting is a process of identifying parameters of the mathematical models that gives fittest curve to the given data sets. Fitting to a dynamical system is a challenging task. Fitted curves can be implemented as a tool for data visualization to infer the query where no data are available, with possibility to seek value beyond the range of the observed data. \\
An ODE represents the physical system behavior. It is usually described mathematically as
\begin{equation}\label{eq01}
\frac{dx}{dt}=f(t,x,a), \quad x(0)=x_0,
\end{equation}
where $x$ is a physical system variable measured at time $t$, $f(\cdot)$ is the function of the system evolution, $a$ is time-independent parameter and $x_0$ is an initial condition. \\
Usually the parameter is estimated based on different of derivative $\frac{dx(t)}{dt}$ to function $f(t,x,a)$ in sum squared error (SSE) of  \smash{$\|\frac{d\hat{x}(t)}{dt}-f(t,\hat{x},\bar{a})\|$}, where $\hat{x}$ is approximated variable $x(t)$ and $\bar{a}$ is estimated time-independent parameter $a$. 
In order to search for an unknown parameter, the ODE in equation (\ref{eq01}) has to be solved or approximated. 
Most suggested approaches are of proposed explicit solution to the ODE. Based on linearized least square minimization problem
of the defined state and measurement vector \citep{Hwang1972} and expanded measurement vector by span state-space to have eigenvectors of weighted matrix \citep{Gavalas} have been suggested for linear ODEs.   
Having explicit solutions in term of known elementary or transcendental functions has been suggested by \cite{Howland61}.
Also, it has been  suggested by \cite{Strebel} to determine tangent slope and coordinates for solution of ODE. Adjoint sensitivity analysis is used to estimate ODE solution by \cite{Aydogmus}. As well, it is approximated by introducing integral estimation based on Bayesian collocation approach to avoid expensive numerical solution of the ODE \citep{Xu}. Having ODE as an equality constraint for quadratic programming with selecting minimization problem of measurement vector as a cost function has further suggested by \cite{Li2005}.\\
Using a spline straightforward least squares approach  proposed by \cite{Varah1982}, introduced finite set of orthogonal constraints to variational characterization of the solution \citep{Brunel2014},
penalized splines used for estimating the time-varying coefficients in an ODE based on variation of the nonlinear least squares
\citep{Cao2012} have been suggested. A method for parameter estimation using the Kalman's filter has been putted into the bargain. With appropriate initial conditions, the filter solution can approximate the minimum-norm weighted least-squares solution of the estimation \citep{Aidala-1977}. Signals described by linear differential equations whose unknown parameters are estimated by using an orthogonal projection  \citep{Mboup2008}.\\
Also parameter estimation for ODE  has been used a
multiple shooting method by \cite{Peifer2007} and
modification of data smoothing method along with a generalization of profiled estimation by using manifold solution typical dimension of differential equation \citep{Ramsay2007}. By presented ODE (in equation (\ref{eq01})) in multipoint boundary value problem in discretized for nonlinear constraint optimization problem \citep{Baake1992}, it has been implied.\\
Parameters estimation for stochastic differential equations by using maximum likelihood and quasi maximum likelihood estimation
\citep{Timmer2000}, Markov chain Monte Carlo methods \citep{Mbalawata2013, Chen2021} for linear and non-linear It\^{o} type stochastic differential equations have been hinted. Partially matching empirical expected signature (using Monte Carlo approximation) of the observed path has been suggested to estimate unknown parameters \citep{Papavasiliou2011}. Furthermore, comparison study of several estimated method for nonlinear stochastic differential equations with discrete time measurement has been conducted by \cite{Singer2002}. Surveyed and discussed of parameters estimation for stochastic differential equation have been proposed by \cite{Nielsen2000}. Also, interesting in estimating unknown parameters for partial differential equation has been suggested by 
a parameter cascading method and a Bayesian approach \citep{Xun2013}. Similar interest is also for estimating unknown parameters for time varying delay differential equation such as using least squares support vector machines \citep{Mehrkanoon2014}. Parameterisation obtained from the power balance equation of Euler-Lagrange systems can be used for their identification and adaptive control even in a scenario with insufficient excitation by having a modification linear regression equation \citep{Romero2021}.\\
Artificial neural network (ANN) has also been applied for parameters estimation of ODEs. Such a technique is based on two subproblems: first subproblem generates to have mapping model of the given data and other subproblem uses the model to obtain an estimate of the parameters \citep{Dua2011} and simultaneously minimization \citep{Dua2012}. Applying machine learning models as a tool to fit nonlinear mechanistic ODEs has been putted by \cite{Bradley2021} by using two-stage indirect approach. Similarly, ANN has also proposed by \cite{Jamili} to estimate parameters of partial differential equation, applying particle swarm optimization has also been shown by \cite{Akman} as a candidate to estimate unknown parameter for ODE .  \\
GD method has been used for numerical solution of ODE by \cite{Neuberger1985}. Two block coordinate descent algorithm has been applied for optimization problem with ODE as dynamical constrains by taking measured vector and state vector as cost function by \cite{Matei}. Modified momentum GD has been applied for parameters estimation of ARX model by \cite{Tu2020}. \\
Unlike previous methods discussed above that the minimization problem between state vector and measurement vector was used, in this article the problem is formulated by considering the discrete form of the derivative part. Then, the nonlinear function is approximated by using a well-known Taylor's expansion. Thus, the formulated problem can be solved directly by using such as NR method, GD method, etc. This direct formulated problem in parameters estimation is a main contribution of this article. To check the applicability, the proposed algorithms are applied to estimate parameters of systems of ODEs which have linear and nonlinear in parameters. The proposed algorithm can be easily expanded to stochastic NR and GD methods, which can be applied for big data. 
Hence, parameters obtained by using both SNR and SGD methods are analyzed and compared to well-known nonlinear least square method. \\
The remaining of this article is structured and organized as the following: The methodology development is given in section \ref{methodology}, explaining, discussion and comparison to the well known nonlinear least square of 3 numerical examples are posted in section \ref{Num_example}. Then, the proposed algorithms are remarked in conclusion section \ref{conclusion}.

\section{Methodology Development}\label{methodology}

Suppose a given data set $(t_d,x^d_k)$ for $d=1,\cdots,N$ and $k=1,\cdots, n$ to be fitted to a system of ODEs
\begin{equation}\label{eq03} 
\frac{dx_j(t)}{dt}=f_j(t,x_k,a_l),\quad j,k=1,\cdots,n,\quad \quad l=1,\cdots,m,
\end{equation}
where $x_k$ is the measured states at time $t$, $a_l$ is unknown parameters. For simplicity, it is presented as 
\begin{equation}\label{eq04}
\frac{dx(t)}{dt}=F(t,x,a),
\end{equation}
where $F=f_j$, $x=x_k$ and $a=a_l$.\\
A function $x(t)$ is approximated by Taylor expansion as
\begin{equation}
x(t)|_{t\approx \tau}=\sum_{n=0}^\infty \frac{x^{(n)}(\tau)}{n!}\left(t-\tau\right)^n.
\end{equation}
Let has $t-\tau=t_{i+1}-t_{i}$, then
for $n=1$, it has
\begin{equation}\label{eqT1}
x(t_{i+1})=x(t_i)+x'(t_i)(t_{i+1}-t_{i})+\frac{(t_{i+1}-t_i)^2}{2}x(c),
\end{equation}
thus, it gives
\begin{equation}
x'(t_i)=\frac{x(t_{i+1})-x(t_i)}{t_{i+1}-t_i}.
\end{equation}
Thus $x'(t_i)$ has to fulfill Lipschitz continuity
\begin{equation}
\|x(t_{i+1})-x(t_i)\|\le L\|t_{i+1}-t_i\|,
\end{equation}
for $L$ is Lipschitz's constant. More accurate derivative of $x(t_i)$ can be approximated by having higher order Taylor series, such as for $n=2$, then
\begin{equation}\label{eqT2}
x(t_{i+1})=x(t_i)+x'(t_i)(t_{i+1}-t_{i})+\frac{x''(t_i)}{2!}(t_{i+1}-t_i)^2.
\end{equation}
To estimate the $x''(t_i)$, it is by approximating $x(t_{i+2})$ from $x(t_i)$ as 
\begin{equation}\label{eqT3}
x(t_{i+2})=x(t_i)+x'(t_i)(t_{i+2}-t_{i})+\frac{x''(t_i)}{2!}(t_{i+2}-t_i)^2.
\end{equation}
Let have a different of equations (\ref{eqT3}) and (\ref{eqT2}) as
\begin{equation}
x(t_{i+2})-x(t_{i+1})=x'(t_i)(t_{i+2}-t_{i+1})+\frac{x''(t_i)}{2!}\left((t_{i+2}-t_i)^2-(t_{i+1}-t_i)^2 \right),
\end{equation}
which gives
\begin{equation}\label{eqT4}
x''(t_i)=2\frac{x(t_{i+2})-x(t_{i+1})-x'(t_i)(t_{i+2}-t_{i+1})}{(t_{i+2}-t_i)^2-(t_{i+1}-t_i)^2}.
\end{equation}
Substitute equation (\ref{eqT4}) into equation (\ref{eqT2}) will gives
\begin{equation}\resizebox{0.9\hsize}{!}{
$x'(t_i)=\frac{(x(t_{i+1})-x(t_i))((t_{i+2}-t_i)^2- (t_{i+1}-t_i)^2)-(x(t_{i+2})-x(t_{i+1}))(t_{i+1}-t_i)^2}{(t_{i+1}-t_i)((t_{i+2}-t_i)^2- (t_{i+1}-t_i)^2)-(t_{i+2}-t_{i+1})(t_{i+1}-t_i)^2}$},
\end{equation}
for case $t_{i+2}-t_{i+1}\ne t_{i+1}-t_i$. 
It is no necessary that $t_{i+1}-t_i$ or $t_{i+2}-t_{i+1}$ to be constant. 

\subsection{Newton-Raphson Method}

The function $f(\cdot)$ of equation (\ref{eq01}) can be approximated by using a well-known Taylor expansion at $\bar{a}$ as
\begin{equation}
\bar{f}(t,x,\bar{a})=f(t,x,\bar{a})+\nabla_a f(t,x,\bar{a})\Delta a +O(\|\Delta a\|^2),
\end{equation}
where $\Delta a=a-\bar{a}$. Thus, equation (\ref{eq01}) can be rewritten as
\begin{equation}\label{NR01}
e(t,x,\bar{a})=-\nabla_a f(t,x,\bar{a})\Delta a,
\end{equation}
where $e(t,x,\bar{a})=f(t,x,\bar{a})-\frac{x(t+\Delta t)-x(t)}{\Delta t}$. Clearly, it is of finding parameter $\bar{a}$ such that the error $e(t,x,\bar{a})=0$. Thus the equation (\ref{NR01}) can be presented in general as
\begin{equation}\label{NR02}
E(t,x,\bar{a})=-\nabla_a F(t,x,\bar{a})\Delta a,
\end{equation}
where $\nabla_a F(t,x,\bar{a})$ is a Jacobian matrix. Hence the unknown parameters can be obtained recursively as
\begin{equation}\label{NR03}
a_{i+1}=a_{i}-\gamma\nabla_aF(t,x,a_i)E(t,x,a_i),
\end{equation}
where
\[\gamma=(\nabla_aF(t,x,a_i)^\top\nabla_aF(t,x,a_i))^{-1},\]
by considering that $\Delta a=a_{i+1}-a_i$. This technique is known as Newton-Raphson (NR) method.\\
\begin{algorithm}[!h]
\DontPrintSemicolon
	\KwData{Data set, $(t_d,x^d_k)$}
	\KwDef{$\frac{dx}{dt}=F(t,x,a)$}
	\KwInput{initial guess, $a_0$}
	\KwInput{tolerable error, $\epsilon$}
	\While{$|a_{i}-a_{i-1}|>\epsilon$}
	{calculate $E$ and $\nabla_aF$ \\
	 update $a_i$ following equation (\ref{NR03})
	}
\caption{Implementation}\label{algorithm}
\end{algorithm}
\subsection*{Stochastic NR Method}
To reduce computational cost for large-scale system of ODEs, a stochastic NR (SNR) method can be applied by solving part of the original system of ODEs. It is by randomly selected $r$ number of equations from $F(\cdot)$ for each iteration. In introduction to stochasticity, a random variable is defined $\pi$ on a probability space $(\Phi,\mathcal{F,P}):\pi:\Phi\to \mathcal{R}$, where $\mathcal{R}$ is a set of all combination of $r$ numbers out of $n$ ODEs. Since $|\mathcal{R}|=\binom{n}{r}$. Then let $\mathcal{R}=\{\rho_1, \rho_2, \cdots, \rho_{|\mathcal{R}|}\}$ with the assumption is that $\pi$ follows uniform distribution, $\mathcal{P}(\pi=\rho_p)=\frac{1}{|\mathcal{R}|}$ for $1 \le p \le |\mathcal{R}|$. Thus, it can be seen that $F(t,x,a,\rho_p):=[f_{j1}(t,x,a_l), f_{j2}(t,x,a), \cdots, f_{jr}(t,x,a)]^\top$ where $\rho_p=\{j1, \cdots,jr\}\subset \{1, \cdots, n\}$. Then equation (\ref{NR03})
can presented in SNR as
\begin{equation}\label{SNR01}
a_{i+1}=a_{i}-\gamma\nabla_aF(t,x,a_i,\rho_p)E(t,x,a_i,\rho_p),
\end{equation}
where
\[\gamma=(\nabla_aF(t,x,a_i,\rho_p)^\top\nabla_aF(t,x,a_i,\rho_p))^{-1}.\]
The algorithm for the stochastic Newton-Raphson Method is presented as
\begin{algorithm}[!h]
\DontPrintSemicolon
	\KwData{Data set, $(t_d,x^d_k)$}
	\KwDef{$\frac{dx}{dt}=F(t,x,a)$}
	\KwInput{initial guess, $a_0$}
	\KwInput{tolerable error, $\epsilon$}
	\While{$|a_{i}-a_{i-1}|>\epsilon$}
	{select $\rho_p$\\
	 calculate $E$ and $\nabla_aF$ \\
	 update $a_i$ following equation (\ref{SNR01})
	}
\caption{Implementation}\label{algorithm_sNR}
\end{algorithm}

\subsection*{Convergence analysis}
\begin{theorem}
Let $g_j(t,x,a):=f_j(t,x,a)-\frac{dx}{dt}$ be twice continuously differentiable, by considering that $\nabla_ag_j(t,x,a)\ne 0$, thus by sequence
\begin{equation}
a_{i+1}=a_i-(\nabla_a g_j(t,x,a_i))^{-1}g_j(t,x,a_i),
\end{equation}
converges to $a^*$ as $i\to\infty$. Then for $i$ sufficient large, 
\begin{equation}
|a_{i+1}-a^*|\le \Omega |a_i-a^*|,\quad \mbox{if}\; \Omega>
\left|\nabla_a g(t,x,a^*)\right|^{-1}\left|\sum_{j=1}^n \mu^\top_iH_j(t,x,a^*)\mu_i I_j\right|,
\end{equation}
where $H_j(t,x,a^*)$ is Hessian matrix of $g(t,x,a^*)$ and $I_j$ is the standard basis of $\mathbb{R}^n$.
Thus $a_i$ converges to $a^*$ quadratically. 
\end{theorem}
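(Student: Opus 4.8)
The plan is to treat this as the standard quadratic-convergence argument for a multivariate Newton iteration, exploiting the fact that the first-order terms cancel exactly and leave only the second-order (Hessian) contribution; since convergence $a_i\to a^*$ is assumed, I only need to control the \emph{rate}. Write $G(a):=(g_1(t,x,a),\dots,g_n(t,x,a))^\top$ and let $J(a):=\nabla_a G(a)$ be the Jacobian. The hypotheses (twice-continuous differentiability together with $\nabla_a g_j\neq 0$) guarantee that $J$ is nonsingular and $J^{-1}$ continuous on a neighbourhood of $a^*$. Set $e_i:=a_i-a^*$, and recall that the root satisfies $G(a^*)=0$.

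First I would apply Taylor's theorem with a second-order Lagrange remainder to each component, expanding about $a_i$ and evaluating at $a^*$:
\begin{equation}
0=g_j(t,x,a^*)=g_j(t,x,a_i)+\nabla_a g_j(t,x,a_i)^\top(a^*-a_i)+\tfrac{1}{2}(a^*-a_i)^\top H_j(t,x,\xi_j)(a^*-a_i),
\end{equation}
for some $\xi_j$ on the segment between $a_i$ and $a^*$. Stacking the $n$ components and using $a^*-a_i=-e_i$ gives, in vector form,
\begin{equation}
J(a_i)\,e_i-G(a_i)=\tfrac{1}{2}\sum_{j=1}^n\bigl(e_i^\top H_j(t,x,\xi_j)\,e_i\bigr)I_j,
\end{equation}
where the right-hand side collects the quadratic remainders along the standard basis $I_j$.

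Next I would substitute the Newton step in the rearranged form $e_{i+1}=J(a_i)^{-1}\bigl[J(a_i)e_i-G(a_i)\bigr]$, which is precisely where the cancellation of the first-order terms occurs, and combine it with the identity above to obtain
\begin{equation}
e_{i+1}=\tfrac{1}{2}\,J(a_i)^{-1}\sum_{j=1}^n\bigl(e_i^\top H_j(t,x,\xi_j)\,e_i\bigr)I_j.
\end{equation}
Writing $e_i=|e_i|\,\mu_i$ with unit direction $\mu_i:=e_i/|e_i|$, the quadratic form scales out a factor $|e_i|^2$, so taking norms and using submultiplicativity yields
\begin{equation}
|e_{i+1}|\le\tfrac{1}{2}\,\bigl|J(a_i)^{-1}\bigr|\,\Bigl|\sum_{j=1}^n\mu_i^\top H_j(t,x,\xi_j)\,\mu_i\,I_j\Bigr|\,|e_i|^2.
\end{equation}

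Finally, for $i$ sufficiently large both $a_i\to a^*$ and $\xi_j\to a^*$, so by continuity $J(a_i)^{-1}\to\nabla_a g(t,x,a^*)^{-1}$ and $H_j(t,x,\xi_j)\to H_j(t,x,a^*)$, and the bracketed factors converge to the quantity $\bigl|\nabla_a g(t,x,a^*)\bigr|^{-1}\bigl|\sum_j\mu_i^\top H_j(t,x,a^*)\mu_i I_j\bigr|$ appearing in the statement. Choosing $\Omega$ to exceed this limiting product is then (conservatively) sufficient, since the genuine prefactor $\tfrac{1}{2}|e_i|$ is eventually $<1$; this gives $|e_{i+1}|\le\Omega|e_i|$, and because the same bound carries an explicit factor $|e_i|$, it is equivalently $|e_{i+1}|\le C|e_i|^2$, i.e.\ quadratic convergence. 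The main obstacle I anticipate is the \emph{matrix-valued} bookkeeping: one must verify that $J(a_i)^{-1}$ exists and stays uniformly bounded near $a^*$ (a uniform lower bound on $|\nabla_a g|$), and that the $n$ distinct intermediate points $\xi_j$ arising from the componentwise remainders can all be driven to $a^*$ simultaneously — correctly handling this vector-valued remainder, rather than the scalar Newton case, is the only genuinely delicate part.
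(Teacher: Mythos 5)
Your proof follows essentially the same route as the paper's: a second-order Taylor expansion of $g_j$ about $a_i$ evaluated at the root $a^*$ (where $g_j(t,x,a^*)=0$), cancellation of the first-order terms via the Newton step to get the error recursion, a norm bound, and a continuity argument sending the intermediate points to $a^*$ to justify the choice of $\Omega$. If anything, your bookkeeping is tighter than the paper's — you retain the factor $\tfrac{1}{2}$ in the Lagrange remainder, normalize $\mu_i=e_i/|e_i|$ so the $|e_i|^2$ factor is extracted cleanly (the paper keeps $\mu_i$ unnormalized inside the sum while also multiplying by $|a_i-a^*|^2$), and you explicitly flag the issue of the componentwise intermediate points $\xi_j$.
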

\begin{proof}
Let difference for each iteration to optimal solution be $\mu_i=a_i-a^*$, then $a_i-\mu_i=a^*$. Thus the roots of equation (\ref{eq03}) can be presented as
\begin{equation}
g_j(t,x,a):=f_j(t,x,a)-x'(t)=0,
\end{equation}
for approximate discrete $\frac{dx_j}{dt}$. Applying Taylor's theorem to $g_j(\cdot)$ by setting $a=a_i$ and
\begin{equation}
g_j(t,x,a_i-\mu_i)=g_j(t,x,a_i)-(\mu_i)\nabla_a g(t,x,a_i)+\sum_{j=1}^n \mu^\top_iH_j(t,x,a_y)\mu_i I_j,
\end{equation}
for $a_y$ is between $a_i$ and $a^*$. By considering that at an optimal solution $a^*$ is a perfect fitting, and $a_i-\mu_i=a^*$ thus
\begin{equation}
0=g_j(t,x,a_i)-(a_i-a^*)\nabla_a g_j(t,x,a_i)+\sum_{j=1}^n \mu^\top_iH_j(t,x,a_y)\mu_i I_j.
\end{equation}
Since $g_j(\cdot)$ and $\nabla_a g_j(\cdot)$ are continuous and $\nabla_a g_j(t,x,a^*)\ne 0$, for $a_i$ is close enough to $a^*$. Thus
\begin{equation}\resizebox{0.9\hsize}{!}{$
0=(\nabla_a g_j(t,x,a_i))^{-1}g_j(t,x,a_i)-(a_i-a^*)+\nabla_a (\nabla_a g_j(t,x,a_i))^{-1}\sum_{j=1}^n \mu^\top_iH_j(t,x,a_y)\mu_i I_j$},
\end{equation}
which gives
\begin{equation}
a_{i+1}-a^*=|\nabla_a g_j(t,x,a_i)|^{-1}\sum_{j=1}^n \mu^\top_iH_j(t,x,a_y)\mu_i I_j,
\end{equation}
thus
\begin{equation}
|a_{i+1}-a^*|\le |\nabla_a g_j(t,x,a_i)|^{-1}|\sum_{j=1}^n \mu^\top_iH_j(t,x,a_y)\mu_i I_j||a_i-a^*|^2.
\end{equation}
In long run, the $\nabla_a g(t,x,a_i)$ will converge to $\nabla_a g(t,x,a^*)$, by having that $a_y$ in between $a_i$ and $a^*$, thus $a_y$ may converge to $a^*$, therefore $\sum_{j=1}^n \mu^\top_iH_j(t,x,a_y)\mu_i I_j$ converges to $\sum_{j=1}^n \mu^\top_iH_j(t,x,a^*)\mu_i I_j$
\begin{equation}
|a_{i+1}-a^*|\le \Omega |a_i-a^*|,\quad \mbox{if}\; \Omega>|\nabla_a g_j(t,x,a_i)|^{-1}\left|\sum_{j=1}^n \mu^\top_iH_j(t,x,a^*_l)\mu_i I_j\right|.
\end{equation}
\end{proof}

\begin{corollary}
Equation (\ref{NR02}) can be presented  in the standard NR method as
\begin{equation}
a_{i+1}=a_i-(\nabla_a F(t,x,a_i))^{-1}E(t,x,a_i).
\end{equation}
As $E(t,x,a_i)\in\mathbb{R}^{n(N-1)\times l}$ and $\nabla_a F(t,x,a_i)\in\mathbb{R}^{n(N-1)\times l}$, then, it can be presented as in equation (\ref{NR03}). 
\end{corollary}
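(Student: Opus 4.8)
The plan is to read the two displayed update rules in the corollary as two descriptions of the same Newton step, the only difference being how the word ``inverse'' is interpreted once the Jacobian $\nabla_a F(t,x,a_i)$ ceases to be square. The entire content of the statement is therefore the passage from a formal inverse to a Moore--Penrose (left) pseudoinverse, so the proof is essentially a dimension count followed by an algebraic identification of $\gamma$.

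First I would start from equation~(\ref{NR02}), namely $E(t,x,\bar a)=-\nabla_a F(t,x,\bar a)\,\Delta a$, and use the relation $\Delta a=a_{i+1}-a_i$ already recorded after~(\ref{NR03}). Solving formally for $\Delta a$ gives $\Delta a=-(\nabla_a F)^{-1}E$, hence $a_{i+1}=a_i-(\nabla_a F(t,x,a_i))^{-1}E(t,x,a_i)$, which is exactly the first displayed equation of the corollary; at this stage $(\nabla_a F)^{-1}$ is only a symbolic inverse. Next I would invoke the stacked form of the problem: collecting the $n$ equations of the system over the $N-1$ discrete-derivative differences produces a Jacobian $\nabla_a F$ with $n(N-1)$ rows and $l$ parameter columns, so (since in practice $n(N-1)\ge l$) the matrix is tall and rectangular and its ordinary inverse does not exist. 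The correct object is the left pseudoinverse
\begin{equation}
(\nabla_a F)^{+}=\bigl(\nabla_a F^\top\nabla_a F\bigr)^{-1}\nabla_a F^\top ,
\end{equation}
which is precisely the map returning the least-squares solution of the overdetermined linear system $\nabla_a F\,\Delta a=-E$. Substituting $(\nabla_a F)^{+}$ for the symbolic inverse and setting $\gamma=(\nabla_a F^\top\nabla_a F)^{-1}$ reproduces equation~(\ref{NR03}), completing the identification.

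The main obstacle is justifying that $(\nabla_a F)^{+}$ is well defined, i.e.\ that $\nabla_a F^\top\nabla_a F\in\mathbb{R}^{l\times l}$ is invertible; this requires $\nabla_a F$ to have full column rank $l$, which is the natural identifiability (persistent-excitation) condition and should be stated as a hypothesis, being the analogue of the $\nabla_a g_j\ne 0$ assumption used in the theorem. I would also flag the one bookkeeping subtlety: to make the product $\gamma\,\nabla_a F\,E$ conformable and to agree with the least-squares solution, the factor multiplying $E$ must carry a transpose, so equation~(\ref{NR03}) is to be read with $\nabla_a F^\top$ acting on $E\in\mathbb{R}^{n(N-1)}$. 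Once the rank condition is in place, the remainder is the routine verification that $(\nabla_a F)^{+}\nabla_a F=I_l$, so that the pseudoinverse genuinely inverts the Newton step on the parameter space and the two forms coincide.
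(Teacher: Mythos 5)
Your proof is correct and matches the paper's intent: the paper states this corollary \emph{without any proof}, and the step it leaves implicit is precisely your identification of the symbolic inverse with the left pseudoinverse, $(\nabla_a F)^{+}=(\nabla_a F^\top\nabla_a F)^{-1}\nabla_a F^\top=\gamma\,\nabla_a F^\top$, which turns the overdetermined system of equation~(\ref{NR02}) into the least-squares update of equation~(\ref{NR03}). Your two caveats are also genuine corrections to the paper: equation~(\ref{NR03}) is only dimensionally conformable when the factor multiplying $E$ carries a transpose (since $E$ is a vector in $\mathbb{R}^{n(N-1)}$, not an $n(N-1)\times l$ matrix as the corollary asserts), and the existence of $\gamma$ requires $\nabla_a F(t,x,a_i)$ to have full column rank $l$, a hypothesis stated nowhere in the paper.
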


\subsection{Gradient Descent Method}
Systems in equation (\ref{eq04}) can also be presented as
\begin{equation}\label{GD01}
G(t,x,a)=F(t,x,a)-\frac{dx}{dt}=0.
\end{equation}
Then, it can be solved by solving minimization problem of the following
\begin{equation}
\min_a\frac{1}{2}\|G(t,x,a)\|^2_2,
\end{equation}
which can also solve by using a gradient descent (GD) method as
\begin{equation}\label{GD02}
a_{i+1}=a_i-\eta\nabla_a G(t,x,a_i)G(t,x,a_i),
\end{equation}
where $\nabla_a G(t,x,a_i)$ is the Jacobian matrix at $a_i$  which is similar to $\nabla_a F$ in equation (\ref{NR02})
and $\eta$ is a step size (also called learning rate). As step size plays an important rule in developing efficient DG method, many researchers have suggested methodologies on computing the step size such as fixed step size, backtracking line search, exact line search, two - point (also called Barzilai and Borwein method \citep{Barzilai}),  alternate step \citep{Dai2003}, extended Cauchy-Barzilai-Borwein search \citep{Raydan},  explicit choose \citep{Hao}, etc. \\
In this article, the step size is updated explicitly as
\begin{equation}\label{l_rate}
\eta=\frac{\delta^\top G(t,x,a_i)}{\delta^\top \delta},\quad \delta=\nabla_a G(t,x,a_i)\nabla G(t,x,a_i)^\top G(t,x,a_i),
\end{equation}
which is chosen based on considering $a_{i+1}=a_i-\eta v_i$, where $v_i=\nabla_a G(\cdot)^\top G(\cdot)$. Then, by applying Taylor expansion to the problem is to have $G(t,x,a_{i+1})\approx G(t,x,a_{i})-\eta\nabla_a G(t,x,a_i)v_i\approx 0$, which implies equation (\ref{l_rate}) for any given $\delta$. However, it is selected $\delta$ as in the equation (\ref{l_rate}) due to guarantee the convergence \cite{Hao} such that 
\begin{equation}
G(t,x,a_{i+1})^\top G(t,x,a_{i+1})\le G(t,x,a_i)^\top G(t,x,a_i).
\end{equation}
It can be implemented by using the following algorithm.
\begin{algorithm}[!h]
\DontPrintSemicolon
	\KwData{Data set, $(t_d,x^d_k),\; d=1,\cdots,N$}
	\KwInput{initial guess, $a_0$}
	\KwDef{$G$, $\nabla_a G$, $\delta$}
	\KwInput{tolerable error, $\epsilon$}
	\While{$|a_i-a_{i-1}|>\epsilon$}
	{calculate $\eta$ and $\nabla_a G^\top G$ \\
	 update $a^i$ following equation (\ref{GD02})
	}
\caption{Implementation}\label{algorithm2}
\end{algorithm}
\subsection*{Stochastic Gradient Descent}
As mentioned above for SNR, it is also possible to use a stochastic gradient descent (SGD) method for reducing the computational cost by solving a part of the original ODEs. With similar definition of SNR, SGD  of equation (\ref{GD02}) can be updated as
\begin{equation}\label{sGD}
a_{i+1}=a_i-\eta\nabla_a G(t,x,a_i,\rho_p)G(t,x,a_i,\rho_p).
\end{equation}
It can applied by using the following algorithm.
\begin{algorithm}[!h]
\DontPrintSemicolon
	\KwData{Data set, $(t_d,x^d_k),\; d=1,\cdots,N$}
	\KwInput{initialize, $a_0$, $\rho_p$}
	\KwDef{$G$, $\nabla_a G$, $\delta$}
	\KwInput{tolerable error, $\epsilon$}
	\While{$|a_i-a_{i-1}|>\epsilon$}
	{select $\rho_p$\\
	 calculate $\eta$ and $\nabla_a G^\top G$ \\
	 update $a^i$ following equation (\ref{sGD})
	}
\caption{Implementation}\label{algorithm22}
\end{algorithm}

\subsection*{Convergence analysis}

\begin{theorem}
A function $G(t,x,a)$ fulfilled Lipschitz's condition having Jacobian $\nabla_a G(t,x,a)$ which rank-$b$ matrix close to $a^*$, updating $a_i$ converges to $a^*$ linearly from initial guess $a_0$ if $a_0$ is the neighborhood of $a^*$.
  
\end{theorem}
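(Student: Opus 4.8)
\section*{Proof proposal}

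The plan is to study convergence through the merit function $\phi(a)=\tfrac12\|G(t,x,a)\|_2^2$ from the minimization problem above, noting that its gradient is exactly the direction driving the iteration (\ref{GD02}), namely $\nabla_a\phi(a)=\nabla_a G(t,x,a)\,G(t,x,a)$. First I would use the hypothesis that $G$ satisfies a Lipschitz condition to show that $\nabla_a\phi$ is itself Lipschitz with some constant $L>0$ on a neighborhood of $a^*$, which delivers the descent lemma
\[
\phi(a_{i+1})\le\phi(a_i)-\Bigl(\eta-\tfrac{L\eta^2}{2}\Bigr)\|\nabla_a\phi(a_i)\|_2^2 .
\]
Because the step size (\ref{l_rate}) is chosen precisely so that $G(t,x,a_{i+1})^\top G(t,x,a_{i+1})\le G(t,x,a_i)^\top G(t,x,a_i)$ and remains bounded within $(0,2/L)$ on the neighborhood, the factor $\eta-\tfrac{L\eta^2}{2}$ can be bounded below by a positive constant $c$, giving the sufficient-decrease estimate $\phi(a_{i+1})\le\phi(a_i)-c\,\|\nabla_a\phi(a_i)\|_2^2$.

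The second ingredient is a local error bound supplied by the rank-$b$ hypothesis on the Jacobian. At a perfect fit $G(t,x,a^*)=0$, so a first-order Taylor expansion of $G$ about $a^*$ gives $G(t,x,a)\approx\nabla_a G(t,x,a^*)^\top(a-a^*)$; in particular the residual vector lies, to leading order, in the range of the Jacobian near $a^*$. Letting $\sigma_b>0$ denote the smallest nonzero singular value of the constant-rank Jacobian, I would then derive the Polyak--{\L}ojasiewicz-type inequality $\|\nabla_a\phi(a)\|_2^2=\|\nabla_a G(t,x,a)\,G(t,x,a)\|_2^2\ge\sigma_b^2\,\|G(t,x,a)\|_2^2=2\sigma_b^2\,\phi(a)$, where the key inequality holds exactly because $G$ has negligible component outside the range of the Jacobian. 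Combining this with the sufficient-decrease estimate yields $\phi(a_{i+1})\le(1-2c\sigma_b^2)\,\phi(a_i)$, i.e.\ geometric decay of $\phi$, and hence of $\|G(t,x,a_i)\|_2$. Feeding this back through the same linearization, the error bound $\|G(t,x,a)\|_2\ge\sigma_b\,\|a-a^*\|$, read along the row space of the Jacobian, converts the geometric decay of the residual into linear convergence $\|a_{i+1}-a^*\|\le\kappa\,\|a_i-a^*\|$ with $\kappa<1$, provided $a_0$ lies in a small enough neighborhood of $a^*$ that the iterates never leave it.

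I expect the main obstacle to be the rank deficiency when $b$ is strictly less than the number of parameters. In that case $\nabla_a G(t,x,a^*)^\top\nabla_a G(t,x,a^*)$ is singular, $a^*$ is not isolated but rather lies on a manifold of minimizers, and the component of the error in the null space of the Jacobian is invisible to the first-order dynamics and is not contracted. The resolution is to invoke the constant-rank assumption to work on a neighborhood where the rank, the smallest nonzero singular value $\sigma_b$, and the Lipschitz and smoothness constants are uniformly controlled, and to phrase the linear-convergence conclusion in terms of the effective row-space component of the error, equivalently convergence to the solution manifold. A secondary technical point is ensuring the neighborhood is small enough that the quadratic remainder in the Taylor expansion is dominated by the linear term, so that both the Polyak--{\L}ojasiewicz inequality and the error bound hold with constants close to their values at $a^*$ throughout the iteration.
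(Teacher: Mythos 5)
Your proposal is correct in its overall architecture and arrives at the same core estimate as the paper --- geometric decay of the residual $\|G(t,x,a_i)\|_2^2$ --- but by a genuinely different route. The paper's proof works hands-on with the explicit step size (\ref{l_rate}): it Taylor-expands $G$ and $\nabla_a G$ about $a_i$, writes the error recursion $\mu_{i+1}=(I-\eta\,\nabla_a G(t,x,a_i)^\top G(t,x,a_i))\mu_i+O(\|\mu_i\|^2)$, inserts the SVD $U\Sigma_b V^\top$ of the rank-$b$ Jacobian, and bounds the numerator and denominator of $\eta$ separately ($\delta^\top G\ge\min_i\lambda_i^2\|I_bV^\top\mu_i\|_2^2$ and $\delta^\top\delta\le\max_i\lambda_i^2\|I_bU^\top G\|_2^2$) to obtain the explicit contraction factor $1-\min_i\lambda_i^4/\max_i\lambda_i^4$. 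You instead run the generic descent-lemma plus Polyak--{\L}ojasiewicz argument on $\phi=\tfrac12\|G\|_2^2$, using the rank hypothesis and $G(t,x,a^*)=0$ to get $\|\nabla_a\phi\|_2^2\ge 2\sigma_b^2\,\phi$. Your route buys two things the paper does not deliver: a clean separation of the step-size condition from the rank condition, and --- more importantly --- the conversion from residual decay to convergence of the iterates themselves, via the error bound $\|G(t,x,a)\|_2\ge\sigma_b\|a-a^*\|$ read along the row space, together with an honest treatment of the rank-deficient case ($b$ strictly less than the number of parameters), where $a^*$ is not isolated and the null-space component of the error is never contracted. The paper's proof stops at residual decay and leaves that last step implicit, even though the theorem asserts convergence of $a_i$ to $a^*$; your discussion of the solution manifold fills exactly that hole.

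The one real gap is on your side of the ledger: you assert, but do not prove, that the adaptive step $\eta=\delta^\top G/\delta^\top\delta$ stays in $(0,2/L)$ near $a^*$, and your sufficient-decrease constant $c$ depends on it. The paper's computation sidesteps this entirely by estimating the decrease produced by that exact $\eta$. To close your version, note that $\delta^\top G=\|\nabla_a G^\top G\|_2^2$ and $\delta^\top\delta=\|\nabla_a G\,\nabla_a G^\top G\|_2^2$, and that $\nabla_a G^\top G$ lies in the row space of the Jacobian, so $1/\max_i\lambda_i\le\eta\le 1/\min_i\lambda_i$ over the nonzero eigenvalues $\lambda_i$ of $\nabla_a G\,\nabla_a G^\top$; by the constant-rank hypothesis these bounds are uniform on a neighborhood of $a^*$, after which a comparison with the Lipschitz constant of $\nabla_a\phi$ (or simply the paper's direct estimate) yields the sufficient-decrease inequality you need.
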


\begin{proof}
$G(t,x,a)$ is said to be a smooth function in neighborhood of $a^*$ if its gradient are Lipschitz continuous. Thus, $G(t,x,a)$ can be approximated by a Taylor expansion as
\begin{equation}\resizebox{0.9\hsize}{!}{$
G(t,x,a_i-\mu_i)=G(t,x,a_i)+\nabla_a G(t,x,a_i)\mu_i+\sum_{j=1}^n (\mu_i)^\top H_j(t,x,a_y)\mu_iI_j+O(\|\mu_i\|^3)$},
\end{equation}
\begin{equation}
\nabla_a G(t,x,a_i-\mu_i)=\nabla_a G(t,x,a_i)+\sum_{j=1}^n I_j(H_j(t,x,a_y))^\top + O(\|\mu_i\|^2),
\end{equation}
where $H(t,x,a_y)$ is Hessian of $G(t,x,a_y)$. Recall GD searching with the expansion for 
\begin{equation}
\mu_{i+1}=\mu_i -\eta\nabla_a G(t,x,a_i)^\top G(t,x,a_i)\mu_i+\sum_{j=1}^n (\mu_i)^\top H(t,x,a_y)\mu_iI_j+O(\|\mu_i\|^3),
\end{equation}
collecting to be
\begin{equation}
\mu_{i+1}=(I-\eta\nabla_a G(t,x,a_i)^\top G(t,x,a_i))\mu_i+O(\|\mu_i\|^2).
\end{equation}
The property of $\nabla_a G(t,x,a^*)$ is having rank $b$. Thus it can be decomposed to be $U\Sigma_bV^\top$. Similarly, the term $\Psi=\nabla_a G(t,x,a_i)^\top G(t,x,a_i)$ can be decomposed to be $V\Sigma^2_bV^\top$, where $\Sigma_b=diag([\sigma_1,\cdots,\sigma_b,0,\cdots,0]^\top)$ and $V$ is orthogonal eigenvector matrix of $\Psi$ with eigenvalue $\lambda_i=\sigma^2_i(i=1,\cdots,b)$. Let $I_b=diag([1_1,\cdots,1_b,0,\cdots,0]^\top)$ and $I_{m-b}=diag([0_1,\cdots,0_b,1,\cdots,1]^\top)$. The convergence is on $I_bV^\top \mu_i$ and $I_bU^\top G(t,x,a_i)$. 
\begin{equation}
0\le\delta^\top G(t,x,a_i)=(\mu_i)^\top\left(\Psi^2\right)\mu_i+O(\|\mu_i\|^3),
\end{equation}
and becomes zero if and only if $I_bV^\top\mu_i=0$, where $\Psi^2=\Psi\Psi$, and
\begin{equation}
0\ge \delta^\top\delta=G(t,x,a_i)^\top\left(\Pi^2\right)G(t,x,a_i)+ O(\|\mu_i\|^3),
\end{equation} 
and be zero if and only if $I_bU^\top G(t,x,a_i)=0$ where $\Pi=\nabla_a G(t,x,a_i)\nabla_a G(t,x,a_i)^\top$ and $\Pi^2=\Pi\Pi$. Thus
\begin{equation}
\delta^\top\delta\le\max_{i}\lambda^2_i\|I_bU^\top G(t,x,a_i)\|^2_2\quad\mbox{and}\quad \delta^\top G(t,x,a_i)\ge\min_{i}\lambda^2_i\|I_bV^\top\mu_i\|^2_2.
\end{equation}
It can imply that
\begin{equation}
\|G(t,x,a_{i+1}\|^2_2\le \left(1-\frac{min_i\lambda^4_i}{\max_i\lambda^4_i}\right)\|G(t,x,a_i)\|^2_2,
\end{equation}
of linear convergence. 
\end{proof}

\section{Illustrative Numerical Application}\label{Num_example}

In this section, the application of the proposed algorithms are given in detail for the respected example problem. The Jacobian matrix $\nabla_aF$ and a derivative vector $E$ are explained in detail. \\
For population dynamic and Lorenz's system example, the Jacobian matrix is static, whereas for activator-inhibitor example, the Jacobian matrix is updated for each iteration. \\
In the implementation of the proposed algorithms \ref{algorithm} and \ref{algorithm22}, data are generated by solving the related system of ODEs based on the defined parameters. The data are then added noise with normal distribution. However, in the implementation, the data are not filtered. The application of the algorithms \ref{algorithm_sNR} and \ref{algorithm22} is presented by using Python (the source code will be given via Github after the acceptance of the article).\\
Suppose solution of systems of ODEs (\ref{eq04}) is given as following
\begin{equation}
x^v_k=g_j(t,x_k,a),
\end{equation}
where $g_j(\cdot)$ is solution functions. Thus, the residue of $x^v_k$ to the $x^d_k$ can be presented as 
\begin{equation}
e_k=x^v_k-x^d_k.
\end{equation}
Hence, the comparison of the proposed methodologies to nonlinear least square (NLS) is 
conducted. Thus, to estimate the unknown parameters $a$, it can be presented in a constraints (NLS) optimization method as
\begin{equation}
\begin{aligned}
\min_{e} \quad & e^2_k,\\
\textrm{subject to} \quad & b_p(a)=\alpha_p,\quad p=1,\cdots, u,\\
  &c_q(a)\ge\beta_q  \quad q=1,\cdots,v,  \\
\end{aligned}
\end{equation}
where $b_p(a)=\alpha_p$ and $c_q(a)\ge\beta_q$ are the constraints ($u$ number equality constraints, and $v$ number of inequality constraints) to be satisfied for feasible set of solution.
Applying Lagrange multipliers to the minimization problem, and having derivative to the respective unknown parameters $a$, the solution can be obtained. \\
In this study, the proposed methodologies are compared with the well-known NLS method, following the approach outlined in this
https://www.mathworks.com, 
where the fitting is performed using the numerical solution of the given ODEs and the provided data set. While, the proposed methodologies and algorithms \ref{algorithm} and \ref{algorithm2} are fitted to the ODEs functions and the given data set. This is a clear contribution of this article. 
\subsection{Fitting a population system}
Suppose a given data set $(t_d,x^d_k),\; d=1,\cdots,N\;k=1,2$
is to be fitted to the following system of equations
\begin{equation}
    \begin{split}
    \frac{dx_1}{dt}=&f_1(t,x,a_l)=a_1x_1-a_2x_1x_2,\\
    \frac{dx_2}{dx}=&f_2(t,x,a_l)=a_3x_2+a_4x_1x_2-a_5x^2_2.
    \end{split}\label{fe01}
\end{equation}
The parameters $a_l,\;l=1,\cdots, 5$ are going to be estimated from the available generated data. From this example, it clearly can be observed that the equation (\ref{fe01}) is linear in parameters and nonlinear in states.  \\
\subsubsection{Implementation NR method }
Hence, from those available data $(t_d,x^d_k)$ and initial guess parameters $a_0$, the error vector $E$ can be presented as
\begin{equation}
E(t,x,a_i)=\begin{bmatrix}
f_1(t,x^d,a_i)-x'_1(t_d)\\
f_2(t,x^d,a_i)-x'_2(t_d)\\
\vdots
\end{bmatrix},\quad d=1,\cdots, N-1.
\end{equation}
It is also similarly for Jacobian's matrix, which is presented as the following
\begin{equation}\resizebox{0.9\hsize}{!}{$
\nabla_aF(t,x,a_i)=\begin{bmatrix}
\left.\frac{\partial f_1}{\partial a_1}\right|_{a_i,x^d}&\left.\frac{\partial f_1}{\partial a_2}\right|_{a_i,x^d}&\left.\frac{\partial f_1}{\partial a_3}\right|_{a_i,x^d}&\left.\frac{\partial f_1}{\partial a_4}\right|_{a_i,x^d}&\left.\frac{\partial f_1}{\partial a_5}\right|_{a_i,x^d}\\
\left.\frac{\partial f_2}{\partial a_1}\right|_{a_i,x^d}&\left.\frac{\partial f_2}{\partial a_2}\right|_{a_i,x^d}&\left.\frac{\partial f_2}{\partial a_3}\right|_{a_i,x^d}&\left.\frac{\partial f_2}{\partial a_4}\right|_{a_i,x^d}&\left.\frac{\partial f_2}{\partial a_3}\right|_{a_i,x^d}\\
\vdots&\vdots&\vdots&\vdots&\vdots
\end{bmatrix},\quad  d=1,\cdots, N-1$},
\end{equation}
where
\begin{equation*}\resizebox{0.9\hsize}{!}{$
\left.\frac{\partial f_1}{\partial a_1}\right|_{a_i,x^d}=x^d_1,\quad \left.\frac{\partial f_1}{\partial a_2}\right|_{a_i,x^d}=-x^d_1x^d_2,\quad
\left.\frac{\partial f_1}{\partial a_3}\right|_{a_i,x^d}=0,\quad \left.\frac{\partial f_1}{\partial a_4}\right|_{a_i,x^d}=0,\quad
\left.\frac{\partial f_1}{\partial a_5}\right|_{a_i,x^d}=0$},
\end{equation*}
\begin{equation*}\resizebox{0.9\hsize}{!}{$
\left.\frac{\partial f_2}{\partial a_1}\right|_{a_i,x^d}=0,\quad \left.\frac{\partial f_2}{\partial a_2}\right|_{a_i,x^d}=0,\quad\left.\frac{\partial f_2}{\partial a_3}\right|_{a_i,x^d}=x^d_2,\quad \left.\frac{\partial f_2}{\partial a_4}\right|_{a_i,x^d}=x^d_1x^d_2,\quad \left.\frac{\partial f_2}{\partial a_3}\right|_{a_i,x^d}=-(x^d_2)^2$}.
\end{equation*}
Thus vector $E(t,x,a_i)$ and matrix $\nabla_a F(t,x,a_i)$ have $(n(N-1))$-rows. Then, the unknown vector 
$a_l,\;l=1,\cdots,5 $
can be solved iteratively following equation (\ref{NR03}). 
From the updated parameters $a_i$, the vector $E(t,x,a_i)$ is updated. The Jacobian $\nabla_a F(\cdot)$ matrix is constant. Then, find new vector $a_i$, continue this cycle until $a_i$ converts. The searching method is presented in the algorithm \ref{algorithm}. 
\subsubsection{Implementation GD method}
Applying GD method to this problem is by taking
\begin{equation}
G(t,x,a_i)=E(t,x,a_i),
\end{equation}
and
\begin{equation}
\nabla_aG(t,a_i)=\nabla_aF(t,x,a_i).
\end{equation}
Thus $a_i$ is updated following equation (\ref{GD02}) and algorithm \ref{algorithm2} until criteria is fulfilled.

\subsubsection{Numerical implementation}
The methodology is implemented numerically to obtain unknown parameters of the problem in equation (\ref{fe01}). The implementation follows the proposed algorithms \ref{algorithm} and \ref{algorithm2}.
For this study, data $(t_d,x^d_k)$ are generated by using parameters $a=[
10\;5\:3\;1\;3
]^\top$.
In the implementation of the proposed algorithms \ref{algorithm} and \ref{algorithm2}, the initial guesses of parameters are selected as shown in Table  \ref{TableIni01}. \\
Both algorithms upon the various initial guesses (shown in Table \ref{TableIni01}) end up at the optimal parameters as 
\begin{equation}
\bar{a}=\begin{bmatrix}10.0646&5.0314&3.1650&1.0222&3.1166\end{bmatrix}^\top,
\end{equation}
which give an error to original parameters as
\begin{equation}
e_{\bar{a}}=|\bar{a}-a|=\begin{bmatrix}
0.0646&0.0314&0.1650&0.0222&0.1166
\end{bmatrix}^\top,
\end{equation}
which $e_{\bar{a}}$ is less than 17\%. \\
The fitting for the system of ODEs from the obtained parameters and the generated data is plotted in figure \ref{fit_firt_example}. It is shown that the generated data (tri-right) are agreed to the estimated  data (solid line). Figure \ref{fit_firt_example}.a and \ref{fit_firt_example}.c is the fitting for states $x_1$ and $x_2$ of generated data from original parameters $a$ and estimated parameters $\bar{a}$ by using NR method and using GD method, respectively. The states for estimated parameters are obtained by solving equation (\ref{fe01}) numerically by using the obtained optimal parameters. On plot depicted in figure \ref{fit_firt_example}.b by using NR method and \ref{fit_firt_example}.d using GD method, it is fitted for $f_j(t,x,a)$ (tri-right), and $f_j(t,x,\bar{a})$ (solid line). 

\begin{figure}
\centering
\begin{subfigure}[h!]{0.5\textwidth}
\includegraphics[width=\textwidth]{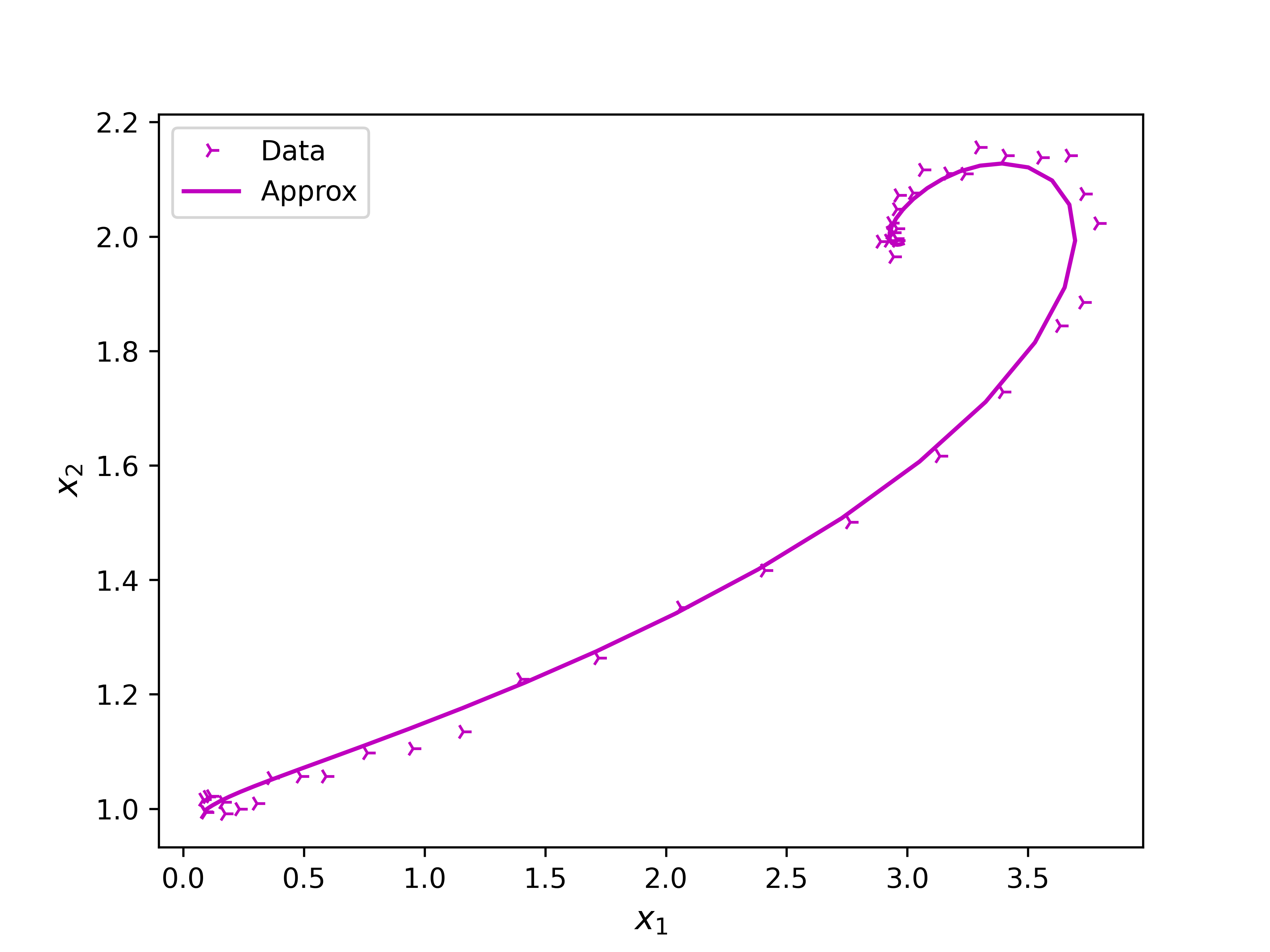}
\caption{fitting to data using NR method}
\end{subfigure}~
\begin{subfigure}[h!]{0.5\textwidth}
\includegraphics[width=\textwidth]{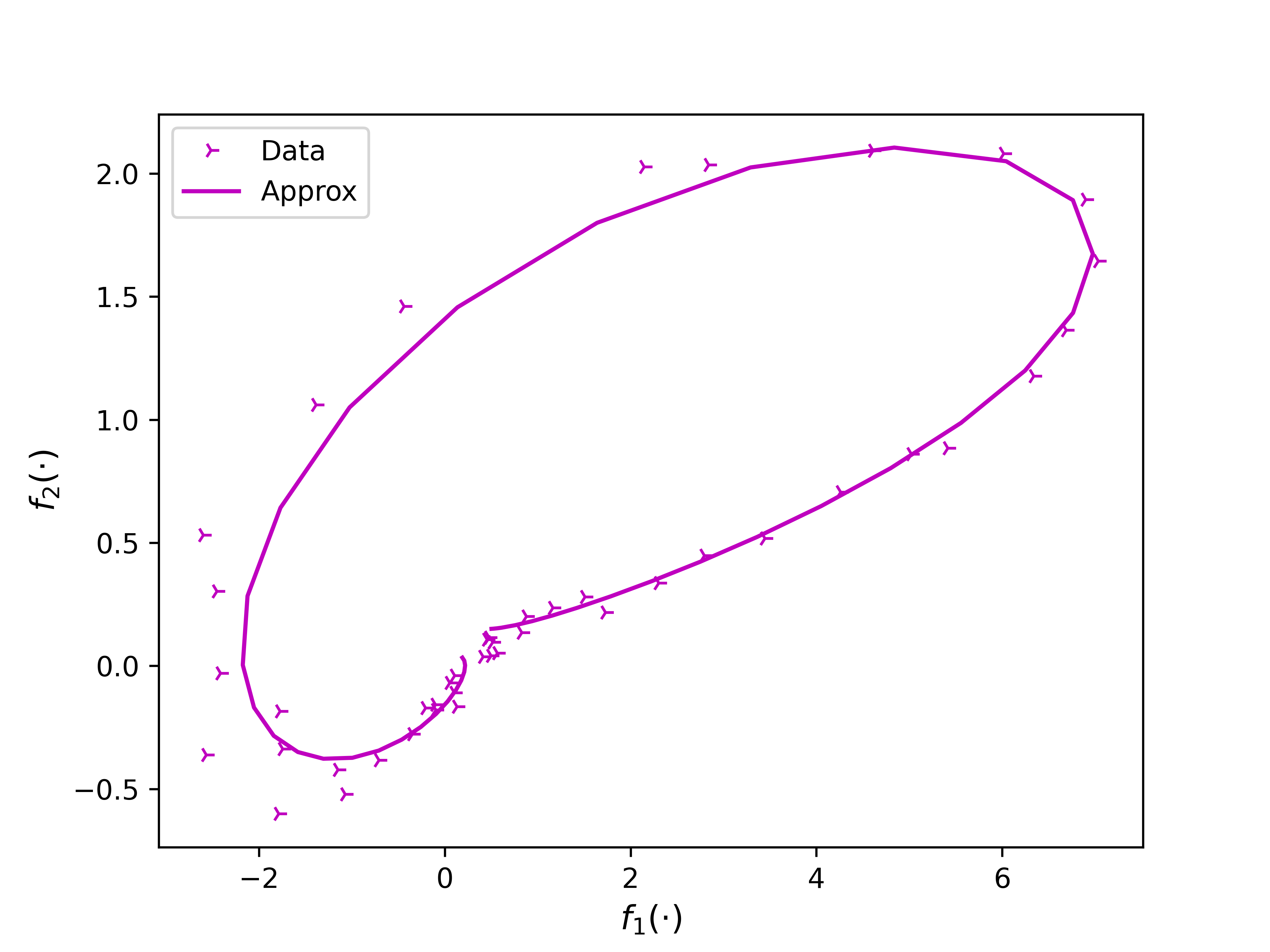}
\caption{fitting to derivative functions using NR method}
\end{subfigure}\\
\begin{subfigure}[h!]{0.5\textwidth}
\includegraphics[width=\textwidth]{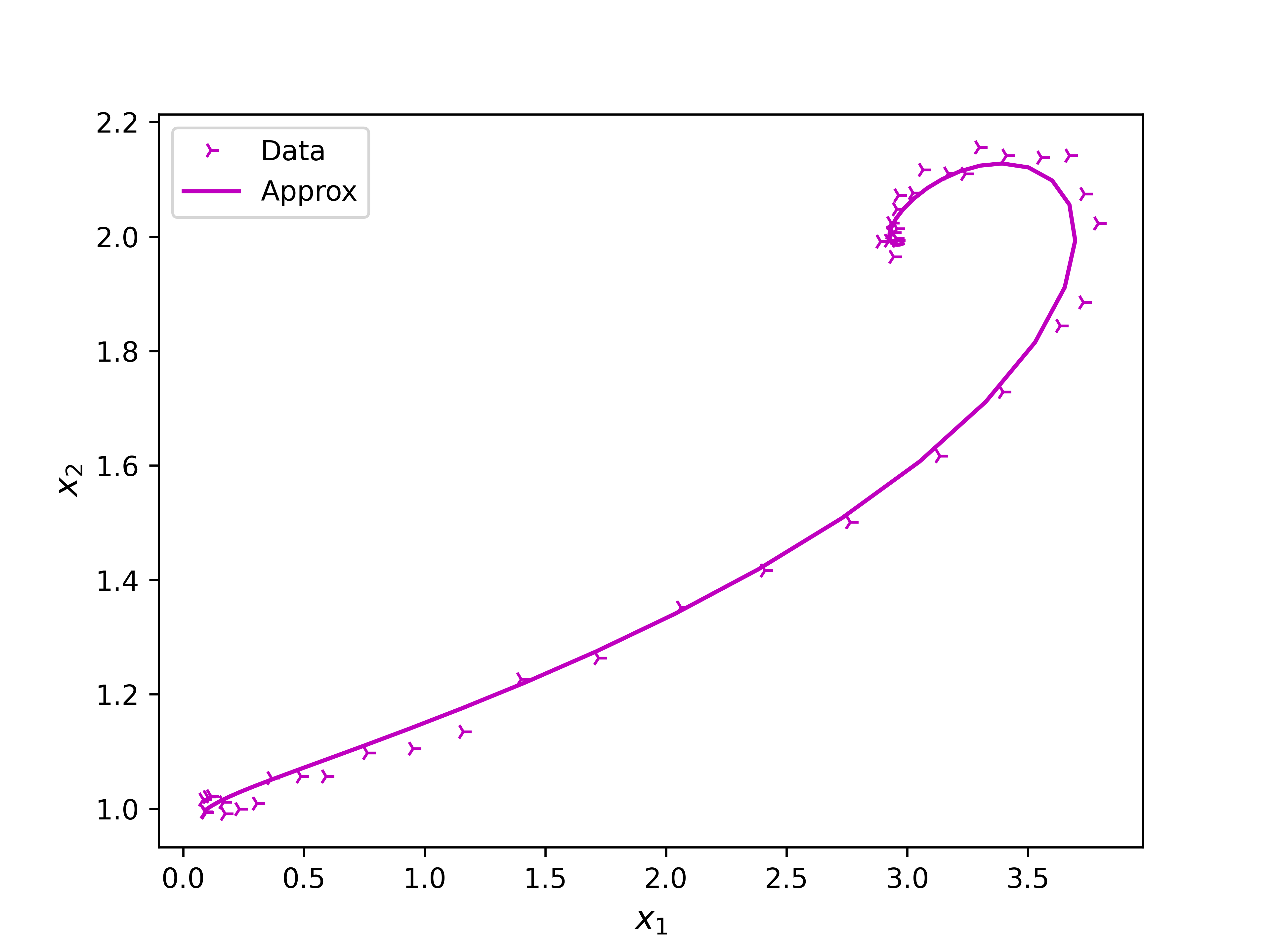}
\caption{fitting to data using GD method}
\end{subfigure}~
\begin{subfigure}[h!]{0.5\textwidth}
\includegraphics[width=\textwidth]{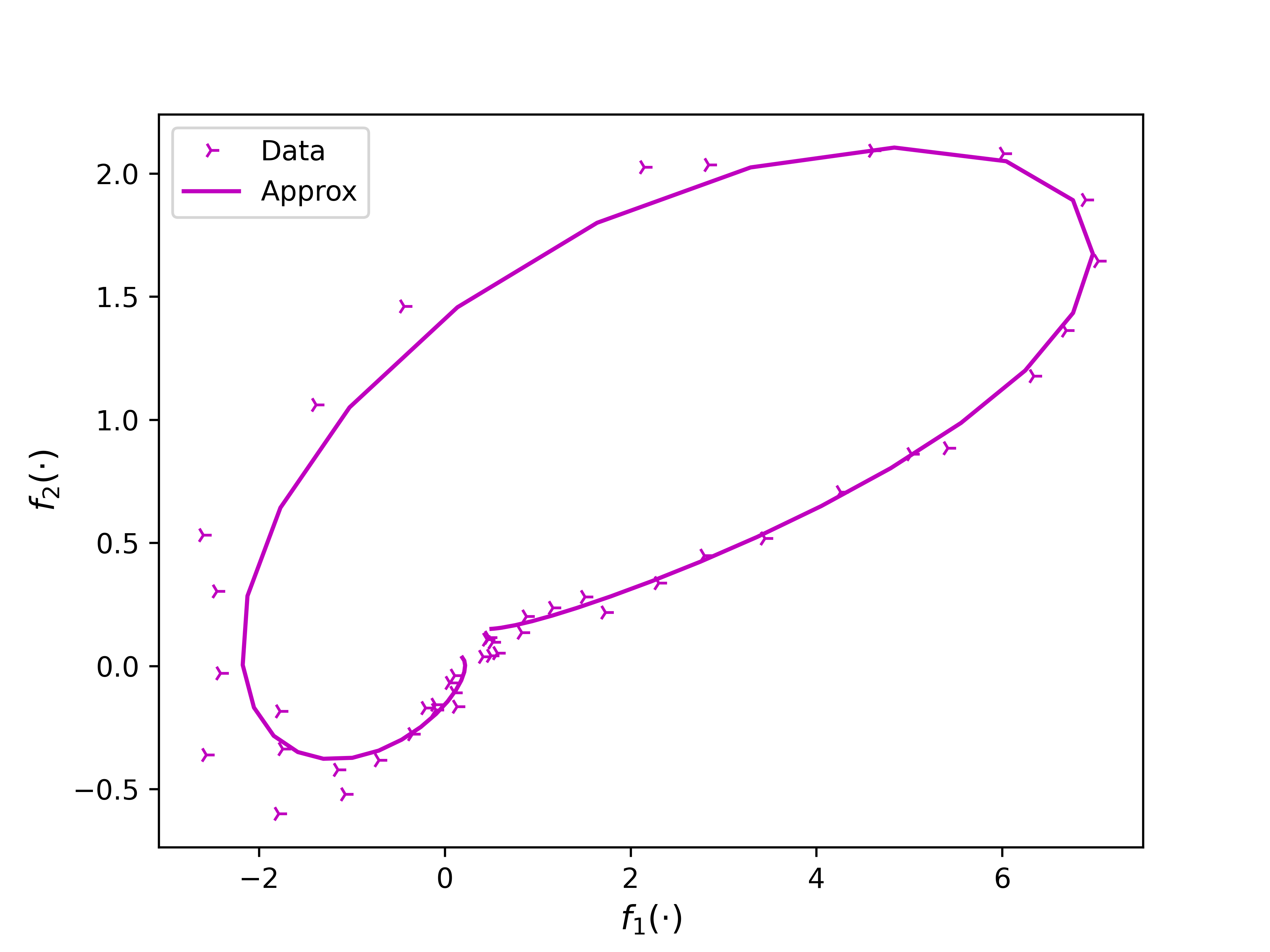}
\caption{fitting to derivative functions using GD method}
\end{subfigure}
\caption{population dynamic Example: scattered plots are original data, solid lines are estimated data}\label{fit_firt_example}
\end{figure}
\begin{table}\centering
\caption{Initial and convergent parameters and iteration - CPU time of a population system example}\label{TableIni01}\footnotesize
\begin{tabular}{|c|c|c|c|c|c|}
\hline
\rotatebox[origin=c]{90}{Initial guess}&$\begin{bmatrix}
0.1\\0.1\\1.2\\1.3\\0.2
\end{bmatrix}$&$\begin{bmatrix}
1\\ -1\\ 2\\ 0\\1
\end{bmatrix}$&$\begin{bmatrix}
-10\\ -10\\ 2\\ -3\\1
\end{bmatrix}$&$\begin{bmatrix}
0\\0\\0\\0\\0
\end{bmatrix}$&$\begin{bmatrix}
100\\ -100\\ -100\\ 20\\-30
\end{bmatrix}$\\
\hline
\multirow{2}{*}{NR}&2-iter &2-iter&2-iter&2-iter&2-iter\\
& $\approx 0$ s&0.01046 s& $\approx 0$  s& $\approx 0$s& 0.0017  s\\
\hline
\multirow{2}{*}{GD}& 5495-iter &2557-iter&4794-iter s &3293-iter&5784-iter\\
&0.8724 s&0.4074 s& 0.7597& 0.5260 s& 0.9062 s\\
\hline
\end{tabular}
\end{table}

\begin{table}\centering
\caption{Error analysis and fitting coefficient for a population system example using both NR and GD methods}\label{EA_first_example_proposed_method}
\begin{tabular}{|c|c|c|c|c|c|}
\hline
variable&bias&MAPE&MAE&RMSE&R$^2$\\ \hline
$x_1$&0.0158&0.0331&0.0271&0.0384&0.9992\\
$x_2$&0.0031&0.0119&0.0176&0.0207&0.9980\\
$f_1(\cdot)$&-0.1028&0.4263&0.2088&0.2755&0.9898\\
$f_2(\cdot)$&-0.0298&0.4500&0.0681&0.0937&0.9851\\
\hline
\end{tabular}
\end{table}
Even though high in computational cost, NR method converges to the optimal searching parameters in a very fast CPU time and relatively 2 iterations. However, using GD method, the CPU time is less than 1 seconds and more than 2500 iterations to converge to the optimal parameters. These CPU time and iteration number are reported in Table \ref{TableIni01}, for this example. 

The error analysis for bias, MAPE, MAE, RMSE and coefficient of determination (R$^2$) is tabulated in Table \ref{EA_first_example_proposed_method}. For states estimation, it is found that bias, MAPE, MAE, RMSE are close to 0 for both states $x_1, x_2$. The fitting coefficient $R^2$ for state $x_1$ slightly better to state $x_2$, which are both approach 1. Furthermore, state $x_2$ has lower bias, MAPE, MAE and RMSE. Similarly, the fitting to the derivative functions $f_1(\cdot), f_2(\cdot)$ in term of fitting coefficient $R^2$, $f_1(\cdot)$ is more closer to 1 than $f_2(\cdot)$. In the other side, $f_1(\cdot)$ has slightly higher in MAE and RMSE and lesser in bias compared to $f_2(\cdot)$. MAPE analysis for $f_2(\cdot)$ has little bit higher than $f_1(\cdot)$. However, the obtained optimal parameters are acceptable which is giving error analyses closed to 0 and fitting coefficient closed to 1.    
It can be concluded that the both methods can estimate very well the ODEs parameters of the population dynamic example. 
\subsubsection{Comparison to nonlinear least square}
From those implementation of constrained NLS method with bounded parameters, 
\begin{equation}
0\le a\le 11,
\end{equation} with the same initial guessed parameters, 
the estimated parameters are obtained 
\begin{equation}
\bar{a}=\begin{bmatrix}
8.0394&3.9912&2.4729&1.1738&2.9589
\end{bmatrix}.
\end{equation}
From these parameters, The fitting is shown in figure \ref{NLS_fitting_firt_example}. It is clearly observed that the parameters produce a shift on fitting to data points. 
\begin{figure}[h!]
\centering
\begin{subfigure}[h!]{0.5\textwidth}
\includegraphics[width=\textwidth]{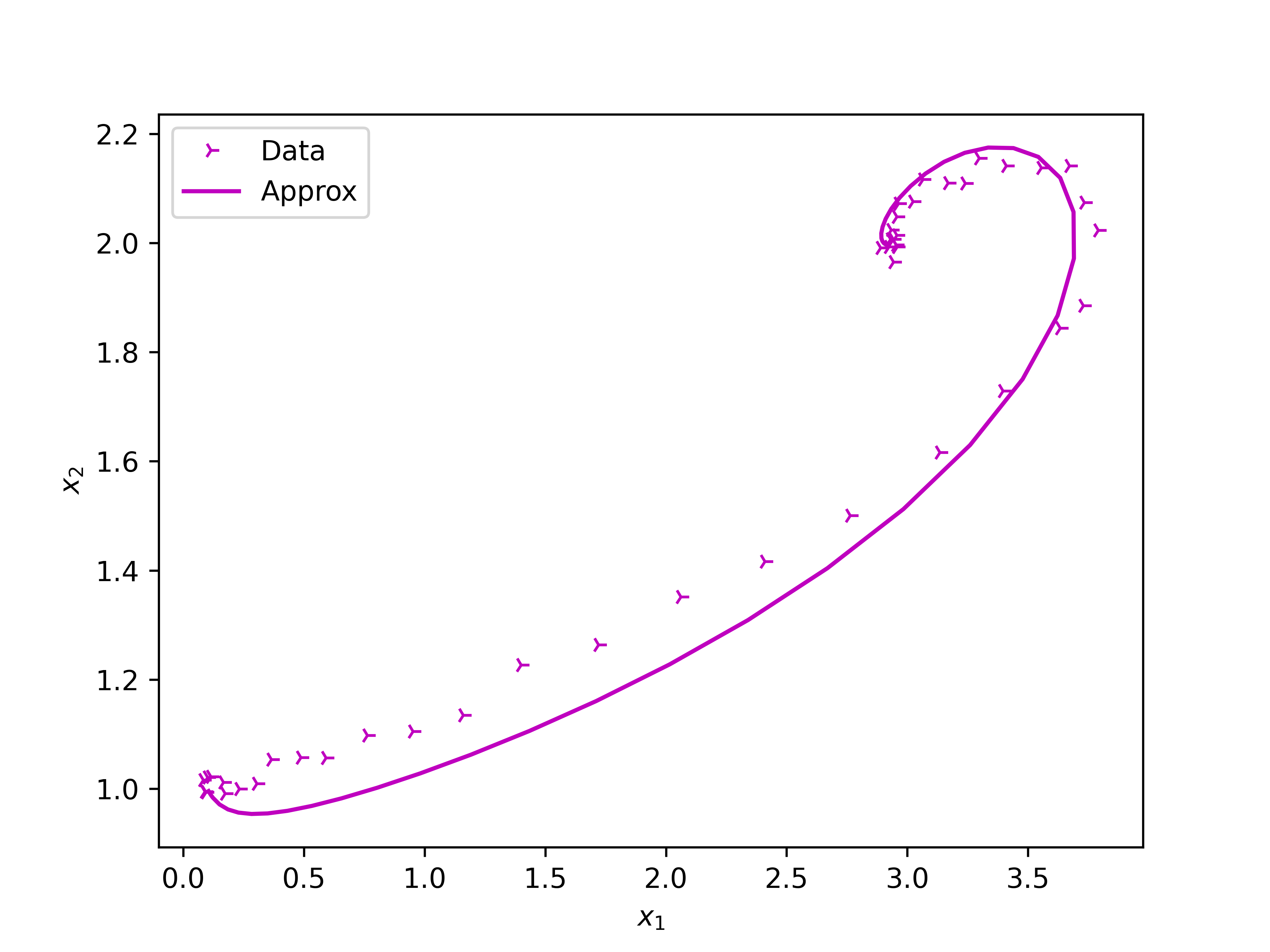}
\caption{NLS fitting to data}
\end{subfigure}~
\begin{subfigure}[h!]{0.5\textwidth}
\includegraphics[width=\textwidth]{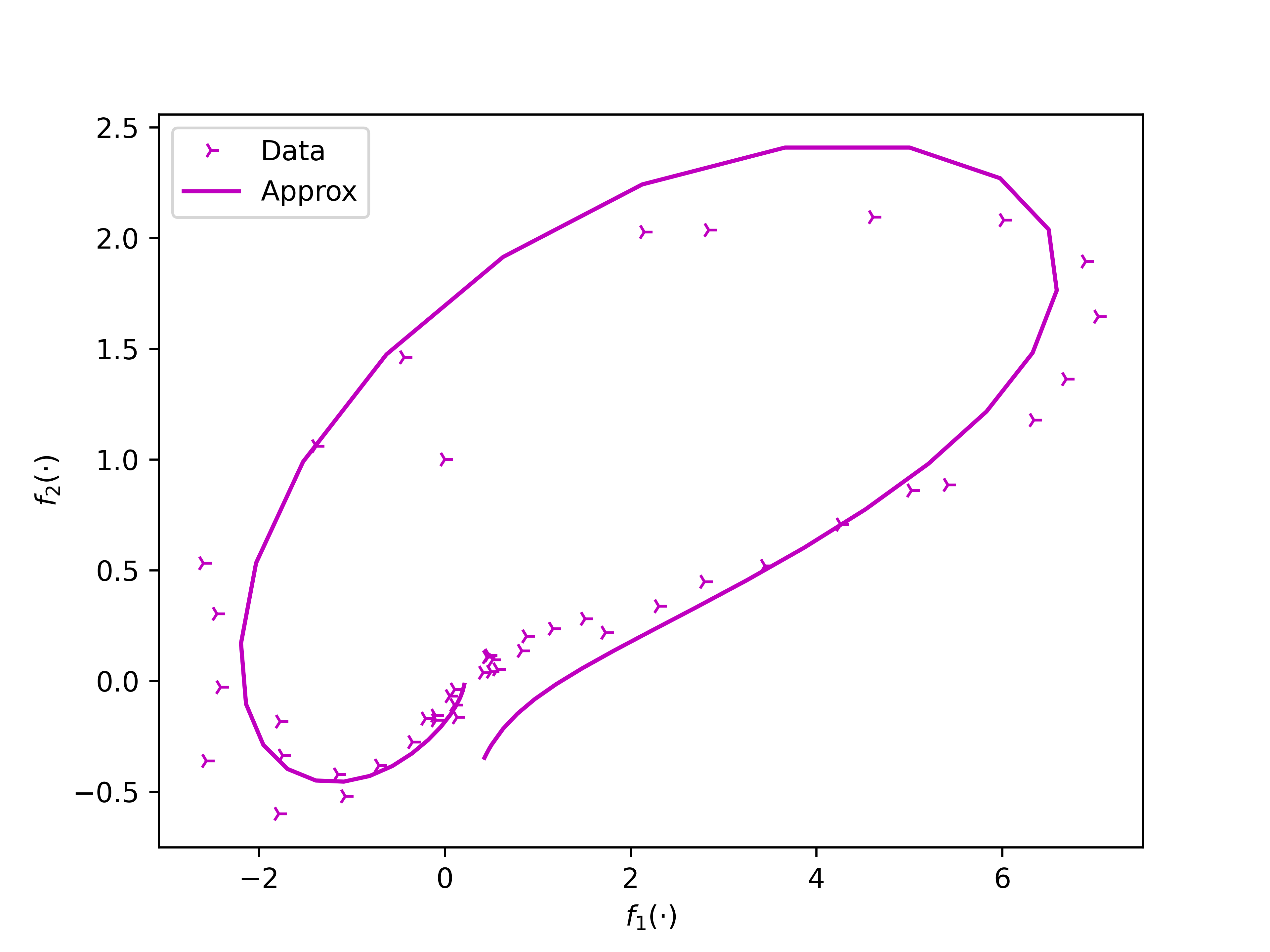}
\caption{NLS fitting to derivative functions}
\end{subfigure}
\caption{first example NLS fitting: dotted is data, continuous line is approximated data}\label{NLS_fitting_firt_example}
\end{figure}
\begin{table}\centering
\caption{Error analysis for population dynamic example using constrained NLS method}\label{EA_first_example_NLS}
\begin{tabular}{|c|c|c|c|c|c|}
\hline
variable&bias&MAPE&MAE&RMSE&R$^2$\\ \hline
$x_1$&0.0769&0.0760&0.1139&0.1741&0.9830\\
$x_2$&0.0537&0.0525&0.0764&0.1003&0.9535\\
$f_1(\cdot)$&-0.0727&0.6252&0.5815&0.8566&0.9010\\
$f_2(\cdot)$&0.0064&1.7939&0.2709&0.3492&0.7928\\
\hline
\end{tabular}
\end{table}\\
The fitting coefficient of NLS method base on error is tabulated in Table \ref{EA_first_example_NLS}. If the fitting coefficient of NLS method compares to the proposed method,  the parameters bias, MAPE, MAE, and RMSE are closer to 0 for the proposed methodology and algorithm \ref{algorithm}, except for bias of derivative function 1. However, coefficients of determination R$^2$ for proposed method are closer to 1. This means that the proposed methods and algorithms \ref{algorithm} and \ref{algorithm2} is superior to NLS method. 

\subsection{Fitting a Lorenz's system}
Suppose a given data set $(t_d,x^d_k),\; d=1,\cdots, N \; k=1,2,3$ is chaotic physical data of Lorenz's system,  
\begin{equation}
\begin{split}
\frac{dx_1}{dt}=&f_1(t,x,a_l)=a_1(x_2-x_1),\\
\frac{dx_2}{dt}=&f_2(t,x,a_l)=x_1(a_2-x_3)-x_2,\\
\frac{dx_3}{dt}=&f_3(t,x,a_l)=x_1x_2-a_3 x_3.
\end{split}\label{fe02}
\end{equation}
where $a_l,\; l=1,2,3$ are unknown parameters to be estimated based on the available data set using the proposed algorithms \ref{algorithm} and \ref{algorithm2}. \\
\subsubsection{Implementation NR method}
For this parameters searching, the derivative vector $E(t,x,a_i)$ can be presented as
\begin{equation}
E(t,x,a_i)=\begin{bmatrix}
f_1(t_d,x^d,a_i)-x'_1(t_d)\\
f_2(t_d,x^d,a_i)-x'_2(t_d)\\
f_3(t_d,x^d,a_i)-x'_3(t_d)\\
\vdots
\end{bmatrix},\quad d=1,\cdots, N-1.
\end{equation}\\
Then, the Jacobian matrix is the derivative of the function with respect to the unknown parameters. It can presented as
\begin{equation}
\nabla_a F(t,x,a_i)=\begin{bmatrix}
x^d_2-x^d_1&0&0\\
0&x^d_1&0\\
0&0&-x^d_3\\
\vdots&\vdots&\vdots
\end{bmatrix},\quad  d=1,\cdots, N-1.
\end{equation}
The update for searching Lorenz's parameters is following equation (\ref{NR03}) and algorithm \ref{algorithm}. Thus, the searching parameters are terminated if the condition in the algorithm is satisfied, in another word if the parameters are convergence. Clearly, the last updated parameters are the optimal parameters, which are the best for fitting the available data to the defined system of ODEs.

\subsubsection{Implementation GD method}
Similarly to the population dynamic example, it is by taking $G(t,x,a_i)=E(t,x,a_i)$ and $\nabla_a G(t,x,a_i)=\nabla_a F(t,x,a_i)$ of this example. Thus, the searching parameters $a_i$ are updated following equation (\ref{GD02}) and algorithm \ref{algorithm2}.
\subsubsection{Numerical implementation}
For the Lorenz's system of ODEs in this example, the data are generated by using the following parameters $a=[
10\;28\;8/3]^\top$ of the equation (\ref{fe02}) which is solved numerically from the initial condition $x_0=[0.1\;1\;5]^\top$. The solution is added normal distribution random variable to noise the states with variance 2. These data are used to estimate the system of ODEs parameters by using the proposed approaches and algorithms \ref{algorithm} and \ref{algorithm2}.\\
In this example, the implementation of the proposed methodologies and algorithms \ref{algorithm} and \ref{algorithm2} are needed to start with initial guessed parameters as shown in Table \ref{TableIni02}. 
As mentioned in population dynamic example, it can be selected freely. NR method converges to optimal searching parameters very fast which is less than 2 ms and only 1 iteration, which gives
$\bar{a}=[9.4746\;28.0349\;2.8675]^\top$.
While, for applying GD method, the optimal searching parameters convert also very fast which is less than 1 seconds and 155 iterations. However, by using NR method, the obtained optimal parameters are ended-up to same point from different initial guess. Meanwhile, using GD method the searched optimal parameters may end-up in a local minima as seen in Table \ref{TableIni02} for 5 initial guess trials. \\
\begin{table}\centering
\caption{Initial and convergent parameters and iteration - CPU time of Lorenz's system example}\label{TableIni02}\footnotesize
\begin{tabular}{|c|c|c|c|c|c|}
\hline
\rotatebox[origin=c]{90}{Initial guess}&$\begin{bmatrix}
15\\1\\-10
\end{bmatrix}$&$\begin{bmatrix}
0\\ 10\\ -10
\end{bmatrix}$&$\begin{bmatrix}
30\\ -10\\ 10
\end{bmatrix}$&$\begin{bmatrix}
-3\\ 1\\ 0
\end{bmatrix}$&$\begin{bmatrix}
0\\0\\0
\end{bmatrix}$\\
\hline
\multirow{3}{*}{NR}&1-iter &1-iter&1-iter&1-iter&2-iter\\
 &0.0020 s&$\approx 0$ &$\approx 0$ &$\approx 0$ &$\approx 0$  \\
&
 \multicolumn{5}{c|}{ 
$\begin{bmatrix}
9.47459164&28.03494241&2.86747935
\end{bmatrix}^\top $}\\
\hline
\multirow{3}{*}{GD}& 25-iter&23-iter&115-iter&155-iter&155-iter\\
&0.84978 s &0.8684 s&0.922 s &0.9347 s&0.9102 s\\
&$\begin{bmatrix}
7.95711877\\27.97766457\\ 2.81835555
\end{bmatrix}$&$\begin{bmatrix}
8.91327758\\27.97766457\\ 2.81213018
\end{bmatrix}$&$\begin{bmatrix}
9.75546914\\ 27.97766457\\  2.80634861
\end{bmatrix}$&$\begin{bmatrix}
9.67565866\\ 27.97766457 \\ 2.80752829
\end{bmatrix}$&
$\begin{bmatrix}
9.68142166\\27.97766457\\ 2.80741994
\end{bmatrix}$\\
\hline
\end{tabular}
\end{table}

\begin{table}\centering
\caption{Error analysis of using GD method for finding Lorenz's system parameters}\label{GD_error_analysis}
\begin{tabular}{|l|r|r|r|r|r|}
\hline
\multicolumn{1}{|c|}{$a_0$}& $\begin{bmatrix}
15\\1\\-10
\end{bmatrix}$&$\begin{bmatrix}
0\\ 10\\ -10
\end{bmatrix}$&$\begin{bmatrix}
30\\ -10\\ 10
\end{bmatrix}$&$\begin{bmatrix}
-3\\ 1\\ 0
\end{bmatrix}$&$\begin{bmatrix}
0\\0\\0
\end{bmatrix}$\\
\hline
bias&&&&&\\
\multicolumn{1}{|r|}{$x_1$}&0.0569&-0.2394&0.0369&0.0211&0.0227\\
\multicolumn{1}{|r|}{$x_2$}&0.1316&0.0654&0.1348&0.1380&0.1377\\
\multicolumn{1}{|r|}{$x_3$}&0.3502&1.1577&0.4240&0.4859&0.4814\\
\multicolumn{1}{|r|}{$f_1(\cdot)$}&0.7734&2.5661&0.9632&1.1215&1.1065\\
\multicolumn{1}{|r|}{$f_2(\cdot)$}&0.9704&2.8068&0.8525&0.8255&0.8256\\
\multicolumn{1}{|r|}{$f_3(\cdot)$}&0.3170&-1.7818&-0.1898&-0.4955&-0.4690\\
\hline
MAPE&&&&&\\
\multicolumn{1}{|r|}{$x_1$}&0.2495&0.3323&0.2095&0.1986&0.1990\\
\multicolumn{1}{|r|}{$x_2$}&0.2905&0.4830&0.2367&0.2214&0.2225\\
\multicolumn{1}{|r|}{$x_3$}&0.0937&0.1297&0.0704&0.0634&0.0636\\
\multicolumn{1}{|r|}{$f_1(\cdot)$}&4.1417&7.1081&4.7069&5.0499&5.0109\\
\multicolumn{1}{|r|}{$f_2(\cdot)$}&1.7077&3.2955&1.1756&1.0091&1.0153\\
\multicolumn{1}{|r|}{$f_3(\cdot)$}&3.0242&5.1359&2.4163&2.0935&2.1144\\
\hline
MAE&&&&&\\
\multicolumn{1}{|r|}{$x_1$}&1.3573&1.9689&1.1063&1.0578&1.0578\\
\multicolumn{1}{|r|}{$x_2$}&1.6019&2.7778&1.2401&1.1545&1.1604\\
\multicolumn{1}{|r|}{$x_3$}&1.9922&3.0877&1.3922&1.2128&1.2190\\
\multicolumn{1}{|r|}{$f_1(\cdot)$}&15.7965&22.6766&14.1684&13.6381&13.6546\\
\multicolumn{1}{|r|}{$f_2(\cdot)$}&18.3308&27.9114&12.9338&11.2522&11.2910\\
\multicolumn{1}{|r|}{$f_3(\cdot)$}&21.1255&36.1948&15.8557&14.3514&14.3723\\
\hline
RMSE&&&&&\\
\multicolumn{1}{|r|}{$x_1$}&1.6721&2.5351&1.3247&1.2562&1.2584\\
\multicolumn{1}{|r|}{$x_2$}&2.0420&3.6730&1.5058&1.3843&1.3889\\
\multicolumn{1}{|r|}{$x_3$}&2.4776&4.1983&1.7205&1.5220&1.5317\\
\multicolumn{1}{|r|}{$f_1(\cdot)$}&19.9965&30.2226&17.3583&17.0121&17.0124\\
\multicolumn{1}{|r|}{$f_2(\cdot)$}&23.3798&40.5785&15.9072&14.3062&14.3519\\
\multicolumn{1}{|r|}{$f_3(\cdot)$}&27.6067&51.3703&19.9318&18.6903&18.7016\\
\hline
$R^2$&&&&&\\
\multicolumn{1}{|r|}{$x_1$}&0.9228&0.8226&0.9516&0.9564&0.9563\\
\multicolumn{1}{|r|}{$x_2$}&0.9089&0.7053&0.9505&0.9581&0.9579\\
\multicolumn{1}{|r|}{$x_3$}&0.8895&0.6826&0.9467&0.9583&0.9578\\
\multicolumn{1}{|r|}{$f_1(\cdot)$}&0.7204&0.3613&0.7893&0.7976&0.7976\\
\multicolumn{1}{|r|}{$f_2(\cdot)$}&0.8233&0.4678&0.9182&0.9338&0.9334\\
\multicolumn{1}{|r|}{$f_3(\cdot)$}&0.8229&0.3868&0.9077&0.9188&0.9187\\
\hline
\end{tabular}
\end{table}
The optimal estimated parameters $\bar{a}$ int Table \ref{TableIni02} by using NR method are used to solve the Lorenz's system of ODEs \ref{fe02} numerically. The fitting of the estimated data to original data  is plotted in figure \ref{fit_Lorenz}.a and estimated derivative function to the estimated derivative function is shown in figure \ref{fit_Lorenz}.b. It can be observed that the data (tri-right) and the estimated output (solid line) are agreed. It also can be seen for error analysis in Table \ref{EA_second_example_NR_method}. The fitting coefficient for all states and derivative functions are above 90\% except for derivative functions $f_1(\cdot)$ and $f_3(\cdot)$, which are 77.62\% and 88.56\%, respectively. Other error analysis, such as MAE and RMSE are greater than 10 for all derivative functions. For bias and MAPE analysis, all states and derivative functions have lower than 6.  
It means that the NR algorithm \ref{algorithm} can estimate the parameters.\\
While, the estimated parameters (Table \ref{TableIni02}) by using GD method for each parameters based on their initial guesses are plotted in figure \ref{fit_Lorenz}.c to figure \ref{fit_Lorenz}.l, respectively. 
The fitting coefficient for all states and derivative functions based on initial guesses is reasonably good except for initial condition $[0\;10\;-10]^\top$ of derivative function which is 0.3613, 0.4678 and 0.3868, respectively. All convergent parameters from each initial guess, bias and MAPE have lower than 7.11, while for MAE and RMSE the errors are less than 51.40, as shown in Table \ref{GD_error_analysis}. \\
\begin{figure}
\centering
\begin{subfigure}[h!]{0.3\textwidth}
\includegraphics[width=\textwidth]{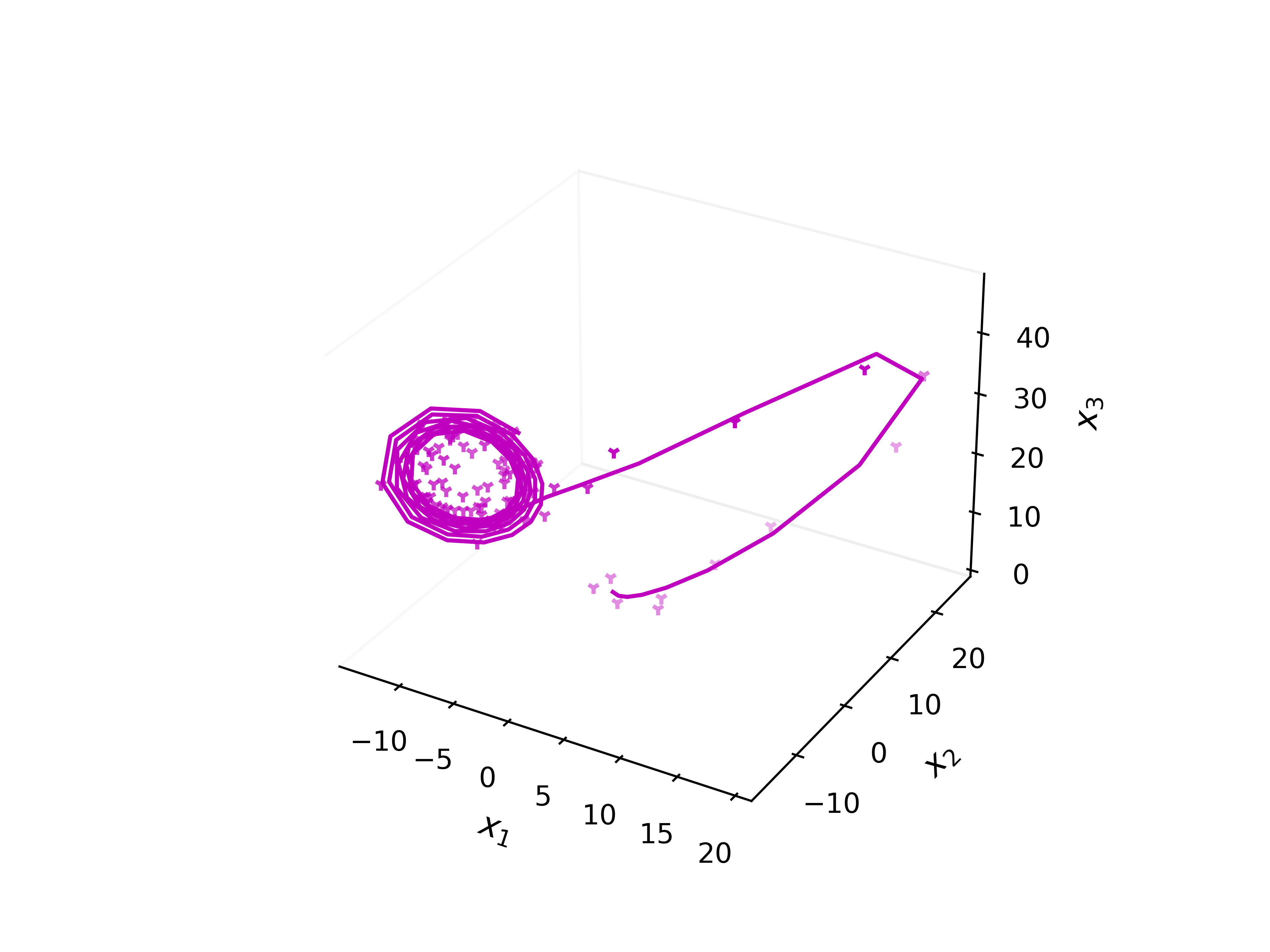}
\caption{fitting to data using NR method}
\end{subfigure}~
\begin{subfigure}[h!]{0.3\textwidth}
\includegraphics[width=\textwidth]{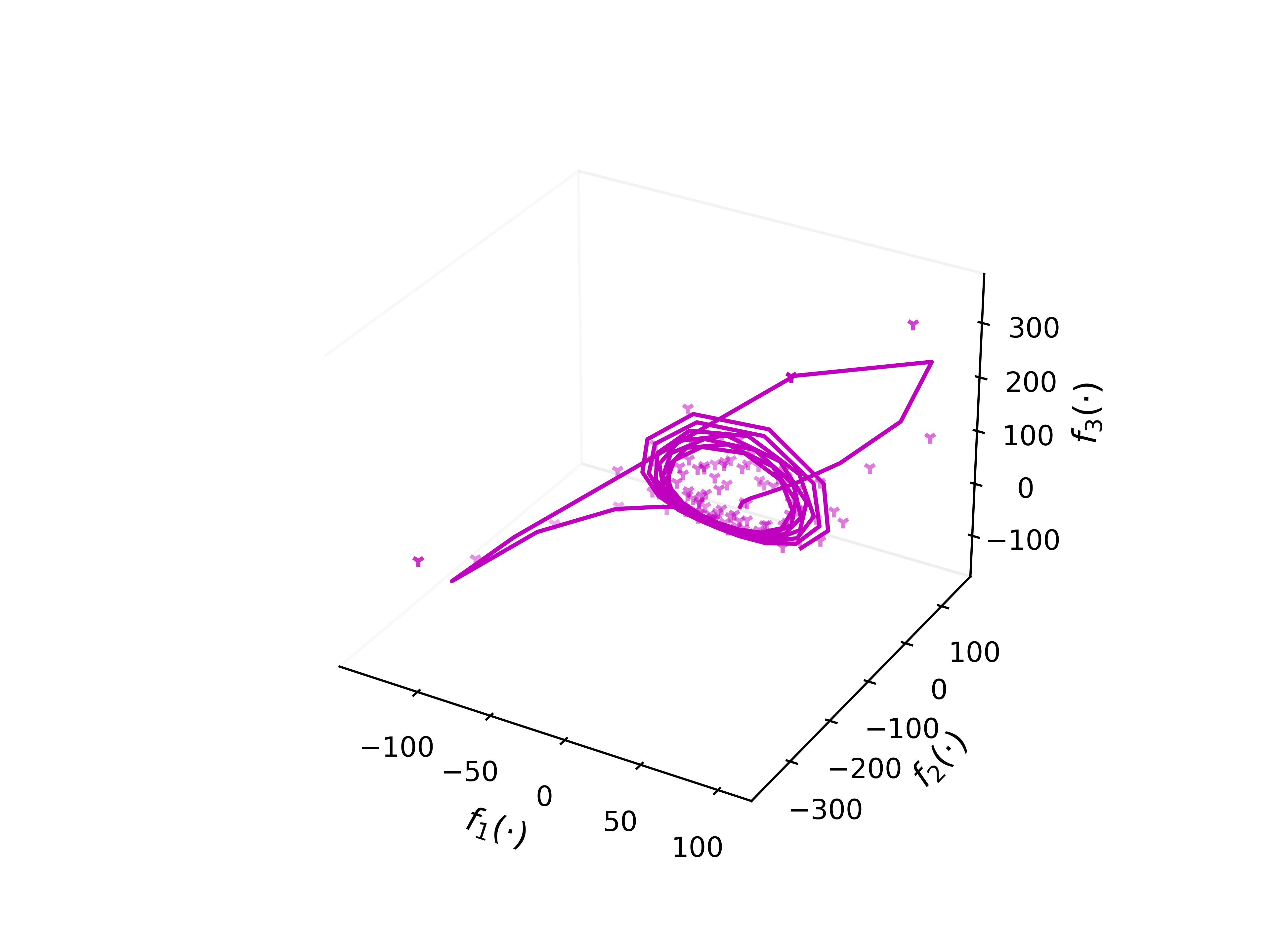}
\caption{fitting to derivative function using NR method}
\end{subfigure}
\begin{subfigure}[h!]{0.3\textwidth}
\includegraphics[width=\textwidth]{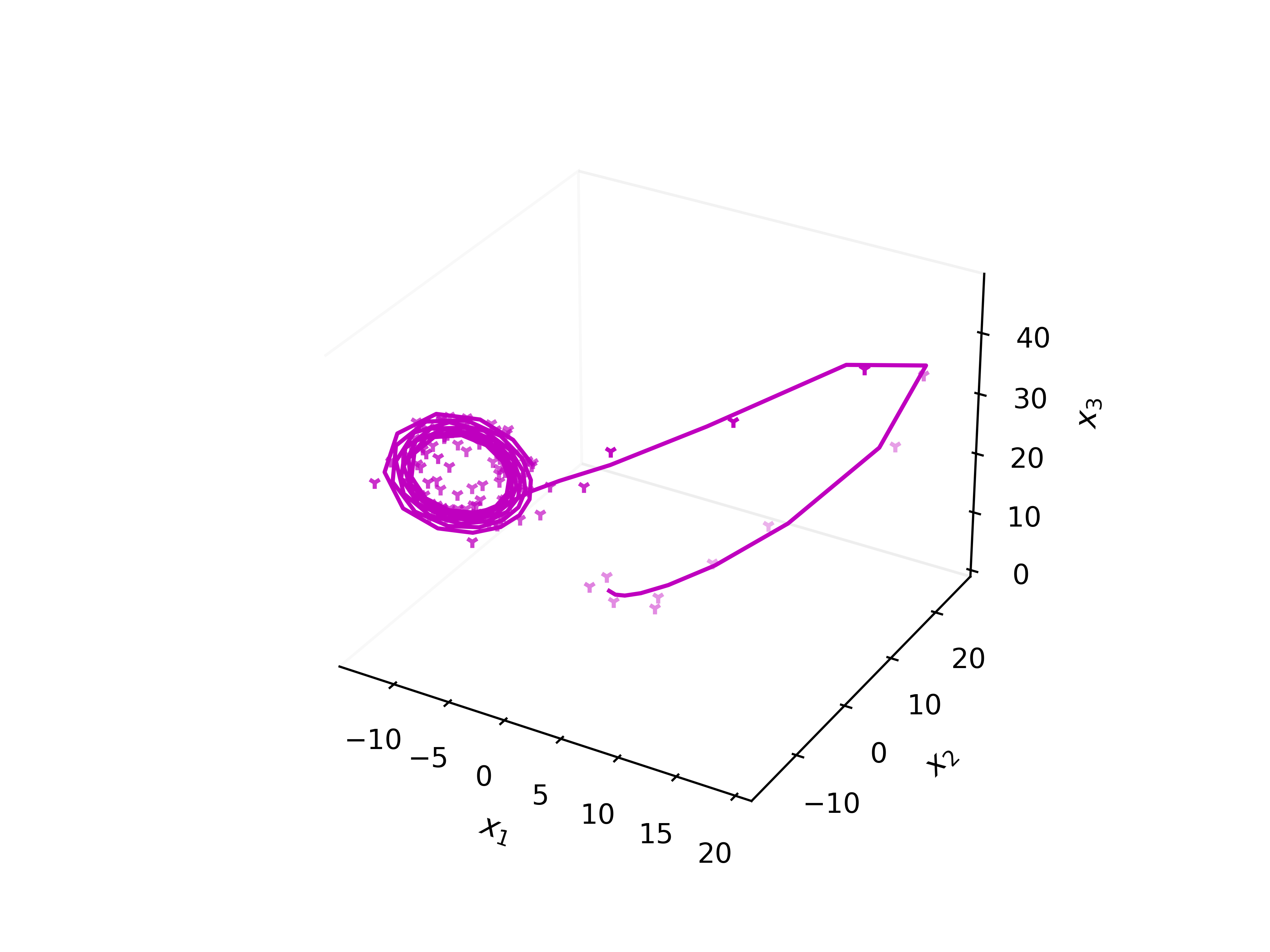}
\caption{fitting to data using GD method: first guess}
\end{subfigure}\\
\begin{subfigure}[h!]{0.3\textwidth}
\includegraphics[width=\textwidth]{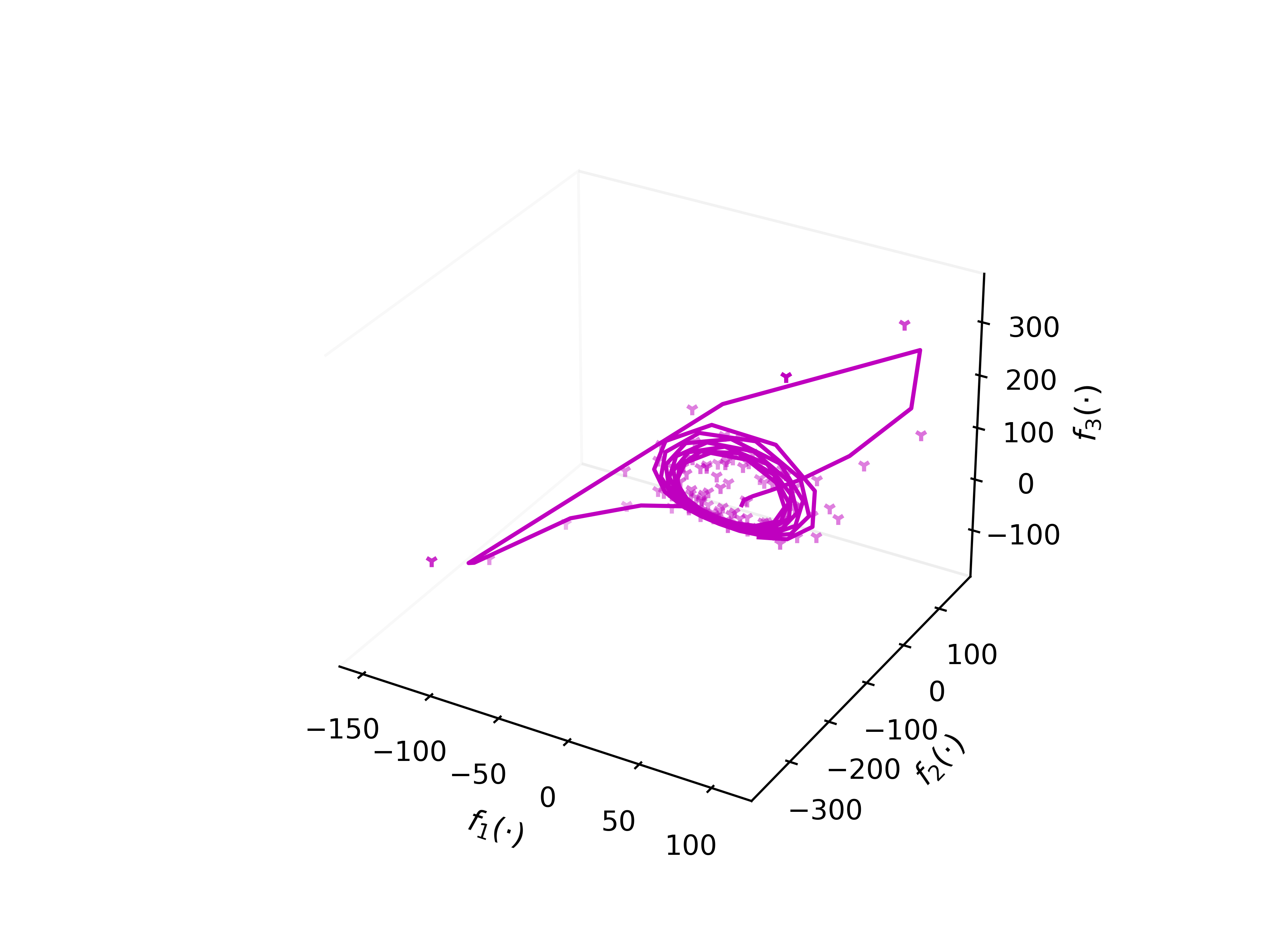}
\caption{fitting to derivative function using GD method: first guess}
\end{subfigure}~
\begin{subfigure}[h!]{0.3\textwidth}
\includegraphics[width=\textwidth]{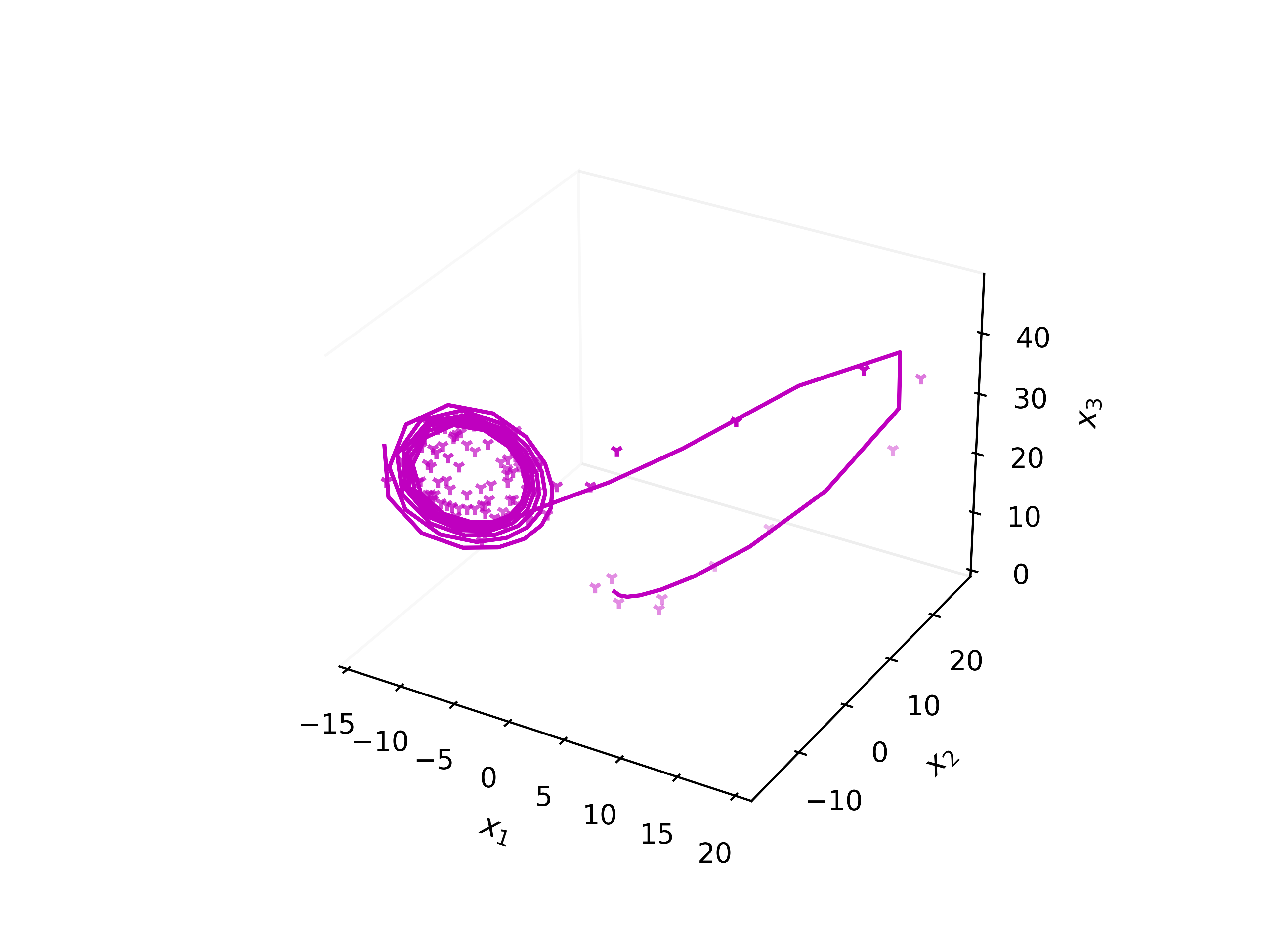}
\caption{fitting to data using GD method: second guess}
\end{subfigure}~
\begin{subfigure}[h!]{0.3\textwidth}
\includegraphics[width=\textwidth]{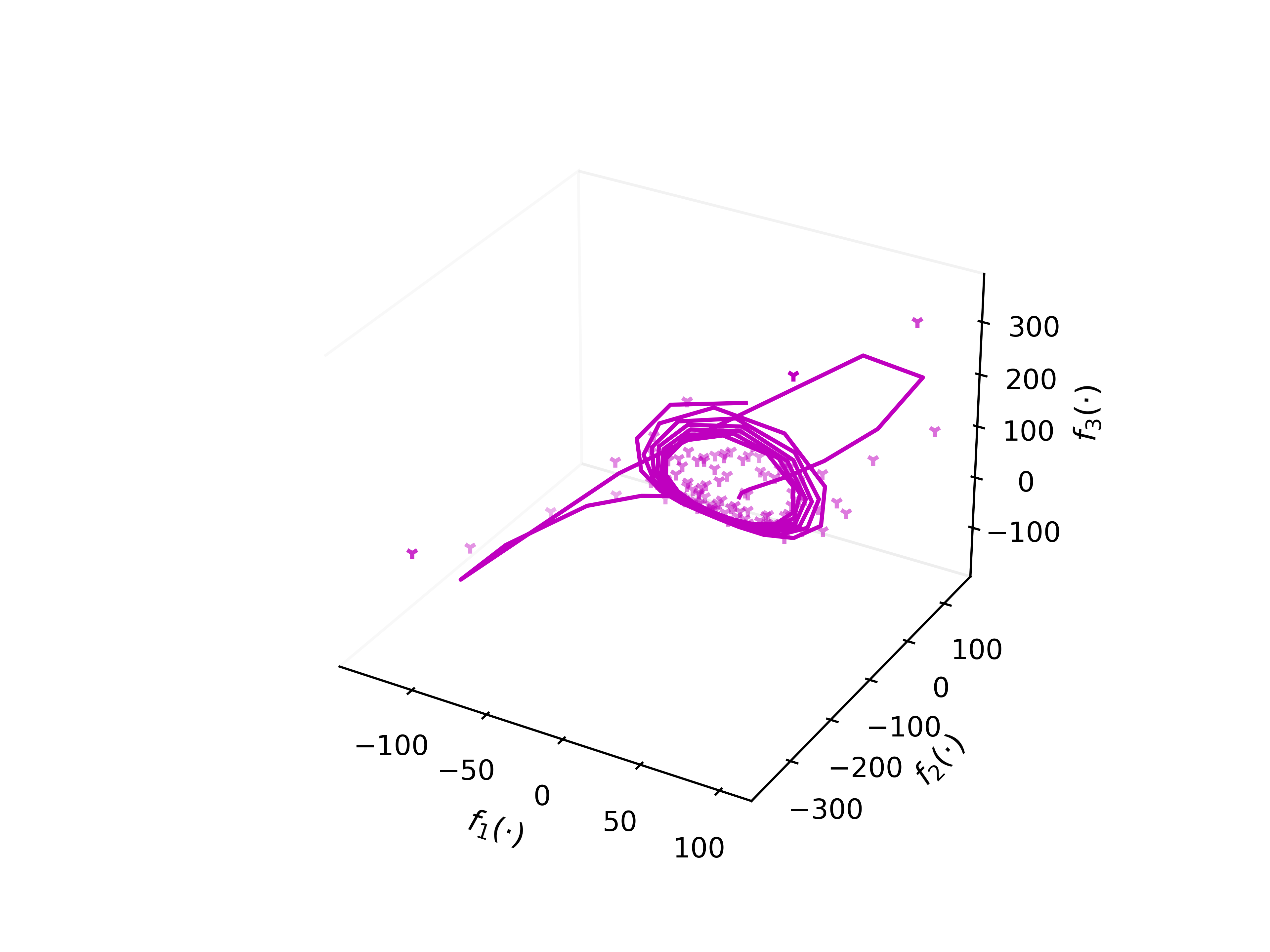}
\caption{fitting to derivative function using GD method: second guess}
\end{subfigure}\\
\begin{subfigure}[h!]{0.3\textwidth}
\includegraphics[width=\textwidth]{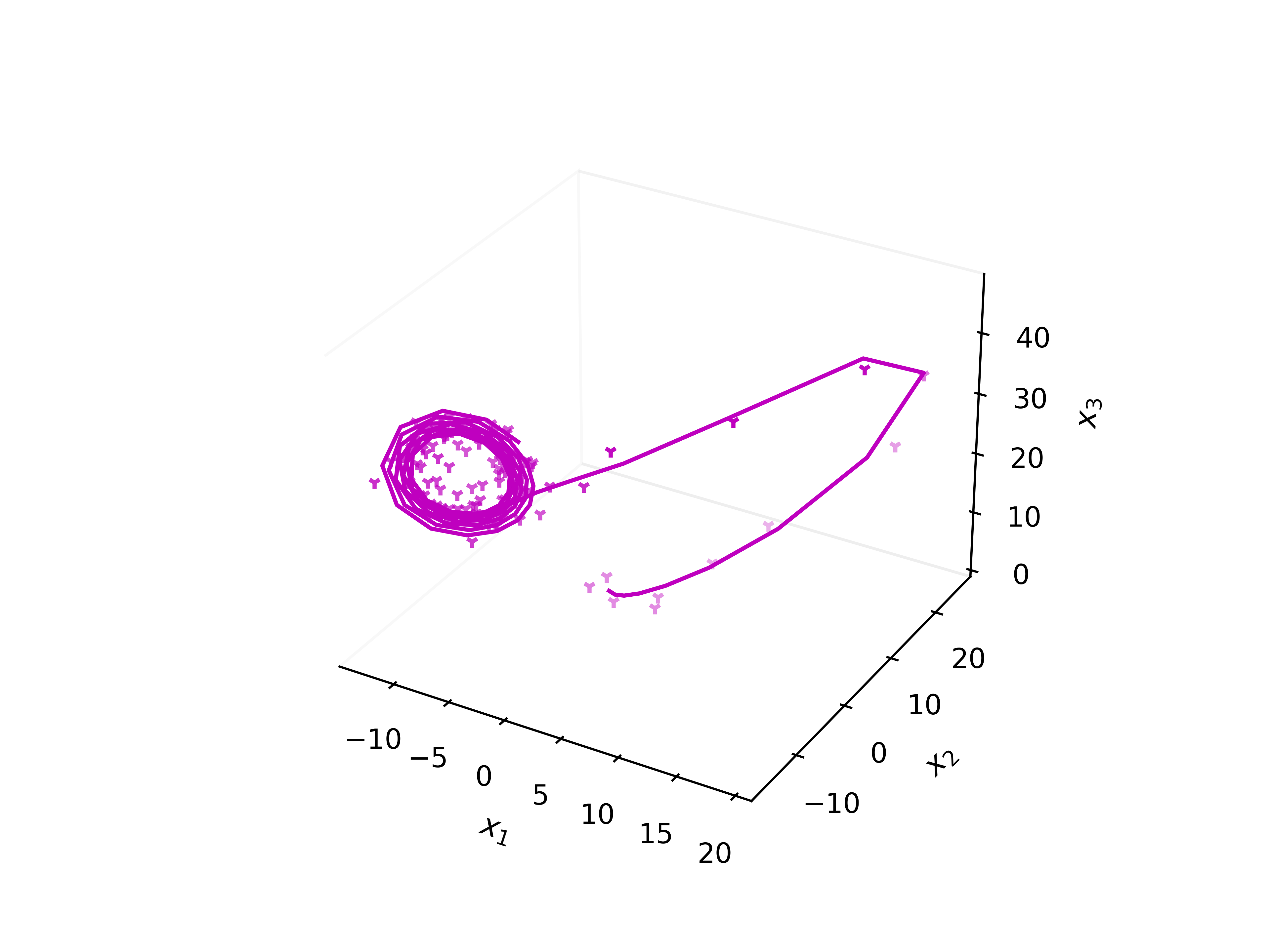}
\caption{fitting to data using GD method: third guess}
\end{subfigure}~
\begin{subfigure}[h!]{0.3\textwidth}
\includegraphics[width=\textwidth]{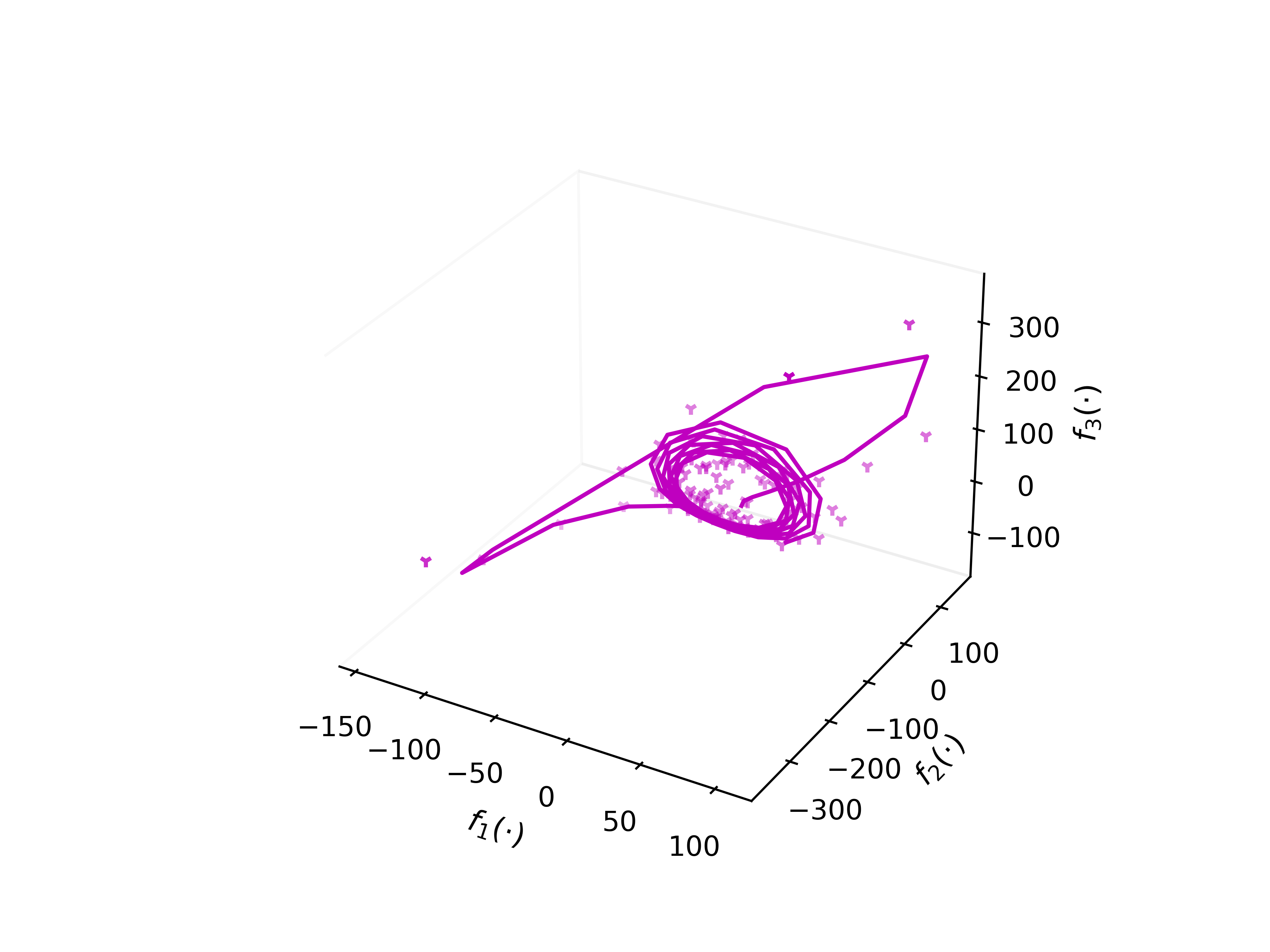}
\caption{fitting to derivative function using GD method: third guess}
\end{subfigure}~
\begin{subfigure}[h!]{0.3\textwidth}
\includegraphics[width=\textwidth]{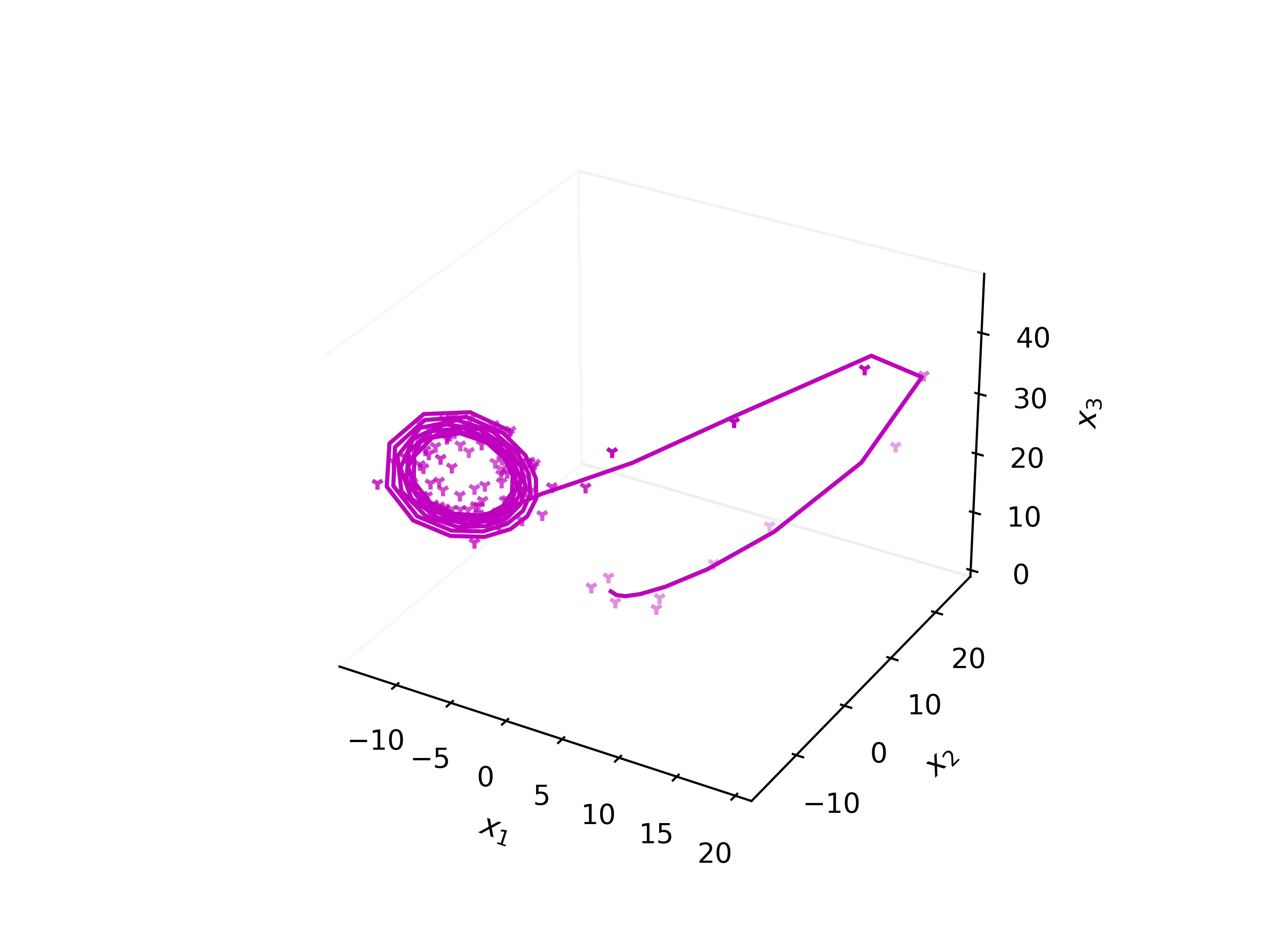}
\caption{fitting to data using GD method: fourth guess}
\end{subfigure}\\
\begin{subfigure}[h!]{0.3\textwidth}
\includegraphics[width=\textwidth]{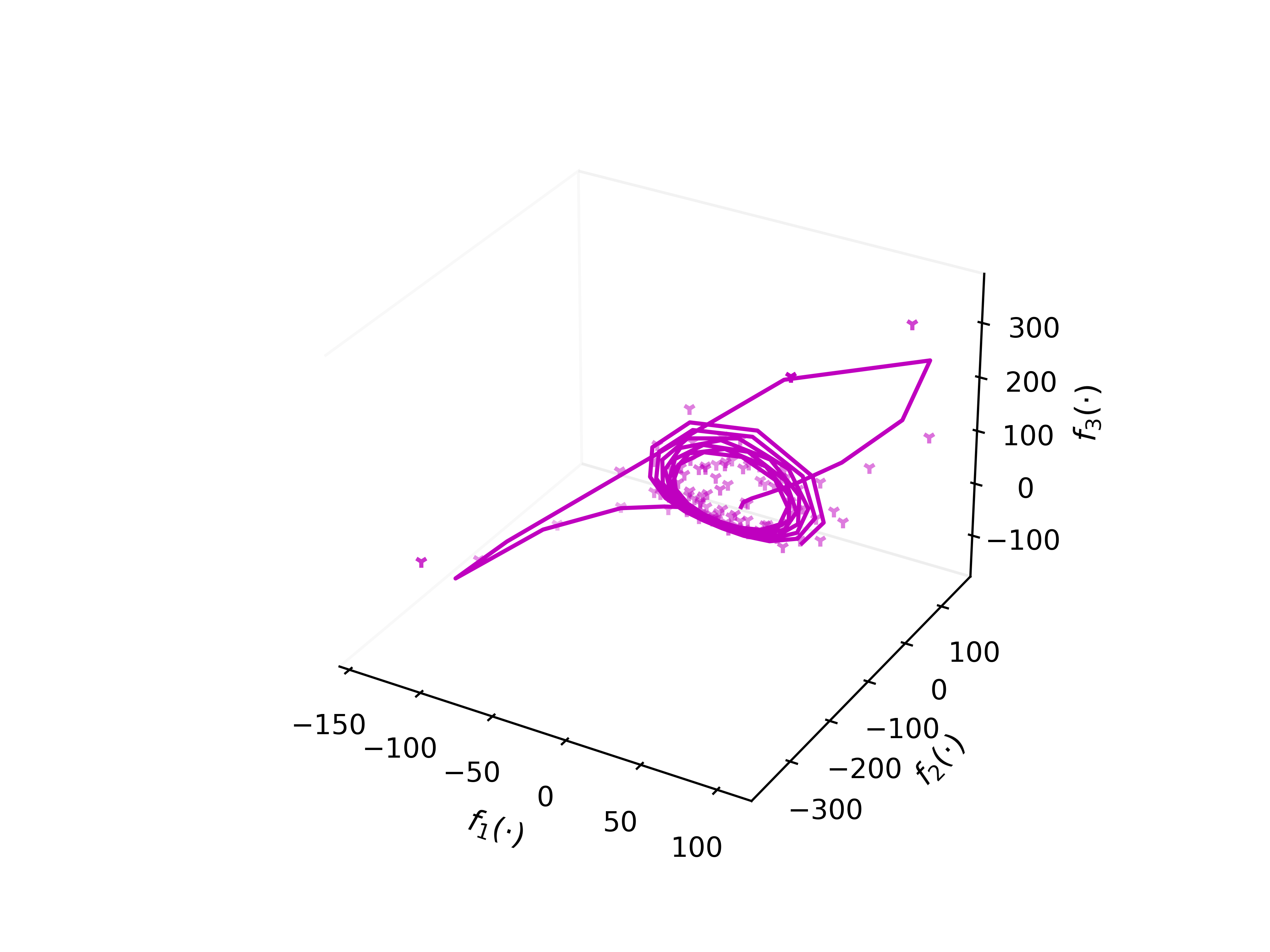}
\caption{fitting to derivative function using GD method: fourth guess}
\end{subfigure}~
\begin{subfigure}[h!]{0.3\textwidth}
\includegraphics[width=\textwidth]{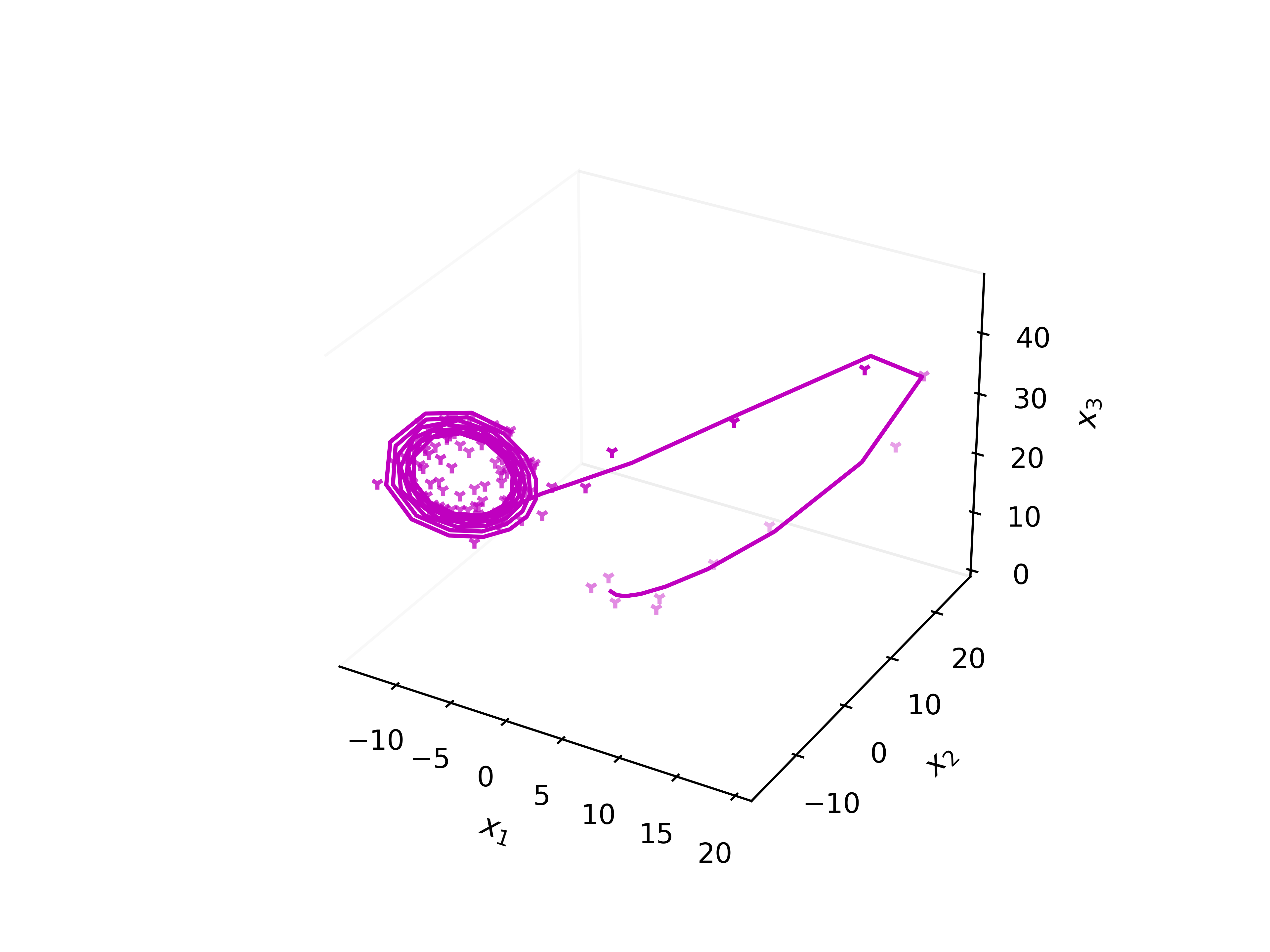}
\caption{fitting to data using GD method: fifth guess}
\end{subfigure}~
\begin{subfigure}[h!]{0.3\textwidth}
\includegraphics[width=\textwidth]{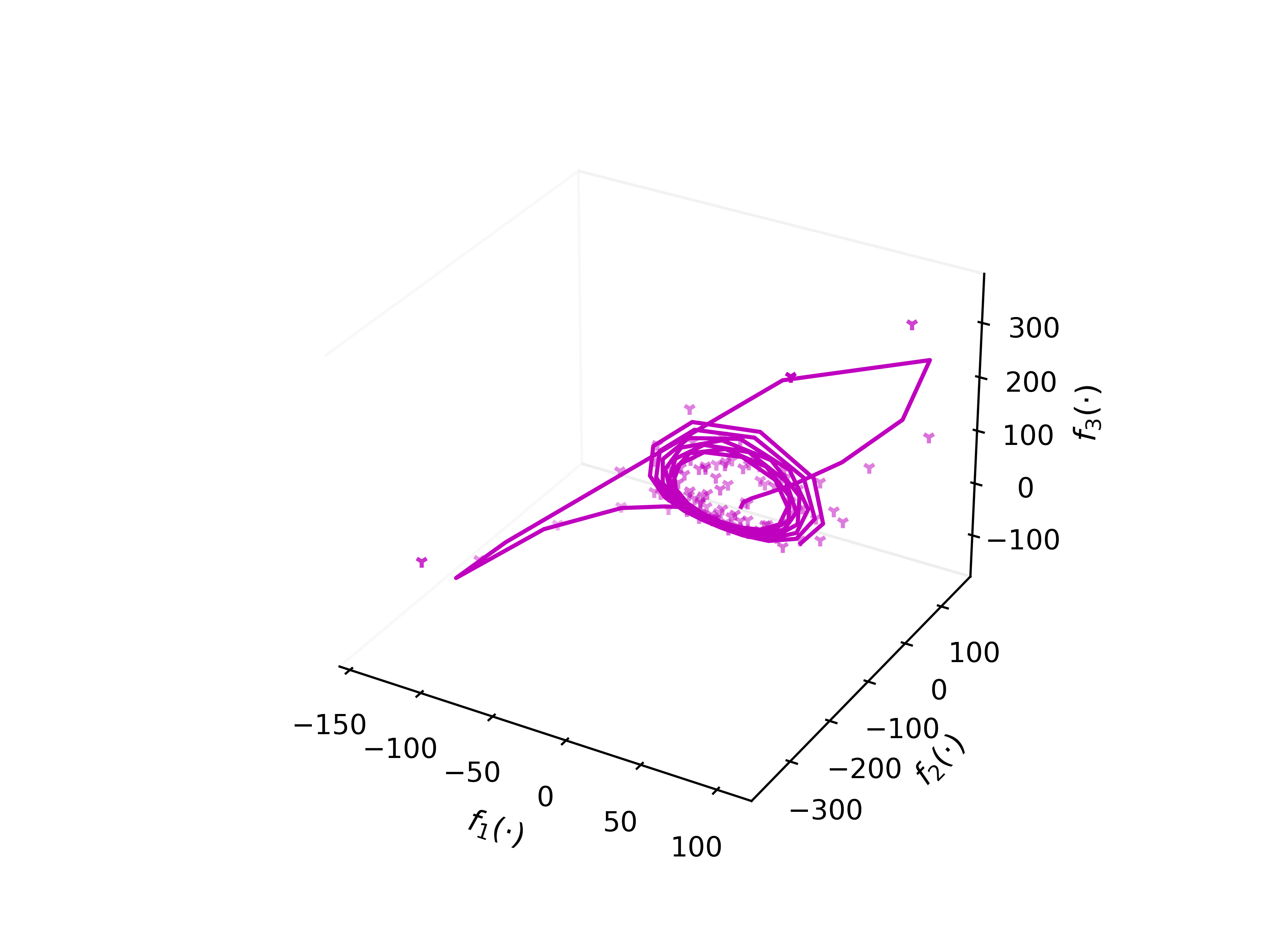}
\caption{fitting to derivative function using GD method: fifth guess}
\end{subfigure}
\caption{Lorenz's system example: marked plots are original data, solid lines are estimated data}\label{fit_Lorenz}
\end{figure}

\begin{table}\centering
\caption{Error analysis for Lorenz's system example using NR method}\label{EA_second_example_NR_method}
\begin{tabular}{|c|c|c|c|c|c|}
\hline
variable&bias&MAPE&MAE&RMSE&R$^2$\\ \hline
$x_1$&0.0392&0.2008&1.1157&1.3487&0.9498\\
$x_2$&0.1503&0.2391&1.3254&1.6295&0.9420\\
$x_3$&0.5704&0.0727&1.4757&1.8549&0.9380\\
$f_1(\cdot)$&1.0479&5.1133&14.3894&17.8915&0.7762\\
$f_2(\cdot)$&0.7459&1.1711&13.4830&17.2733&0.9036\\
$f_3(\cdot)$&-0.3850&2.4390&16.7952&22.1846&0.8856\\
\hline
\end{tabular}
\end{table}

\subsubsection{Comparison to nonlinear LS method}
In this comparison study, in order to apply the constrained-NLS method, the parameters are bounded to be inside this constraint
\begin{equation}
2\le a \le 30
\end{equation}
where all three parameters  are lying inside the bound. 

However, with the same initial guessed parameters, the optimal solution is found as 
\begin{equation}
\bar{a}=\begin{bmatrix}
8.2146&29.8385&2
\end{bmatrix}.\label{eq39}
\end{equation}
The obtained parameters $\bar{a}$ in equation (\ref{eq39}) by using constrained NLS method are farther than the optimal parameters resulted by the proposed methods from the original parameters. The fitting to the data set is plotted in figure \ref{fit_NLS_Lorenz} and fitting coefficients are shown in Table \ref{EA_second_example_NSL_method}.
Contrained NLS method produces bias, MAPE, MAE and RMSE farther from 0, while the proposed methodology and algorithm \ref{algorithm} has closer bias, MAPE, MAE, and RMSE to 0. For a goodness to fit R$^2$, the proposed methodologies and algorithms give closer to 1, even 1 for derivative function 1. It can be drawn that the proposed methodologies and algorithms \ref{algorithm} and \ref{algorithm2} is superior to the constrained NSL method. 

\begin{figure}
\centering
\begin{subfigure}[h!]{0.5\textwidth}
\includegraphics[width=\textwidth]{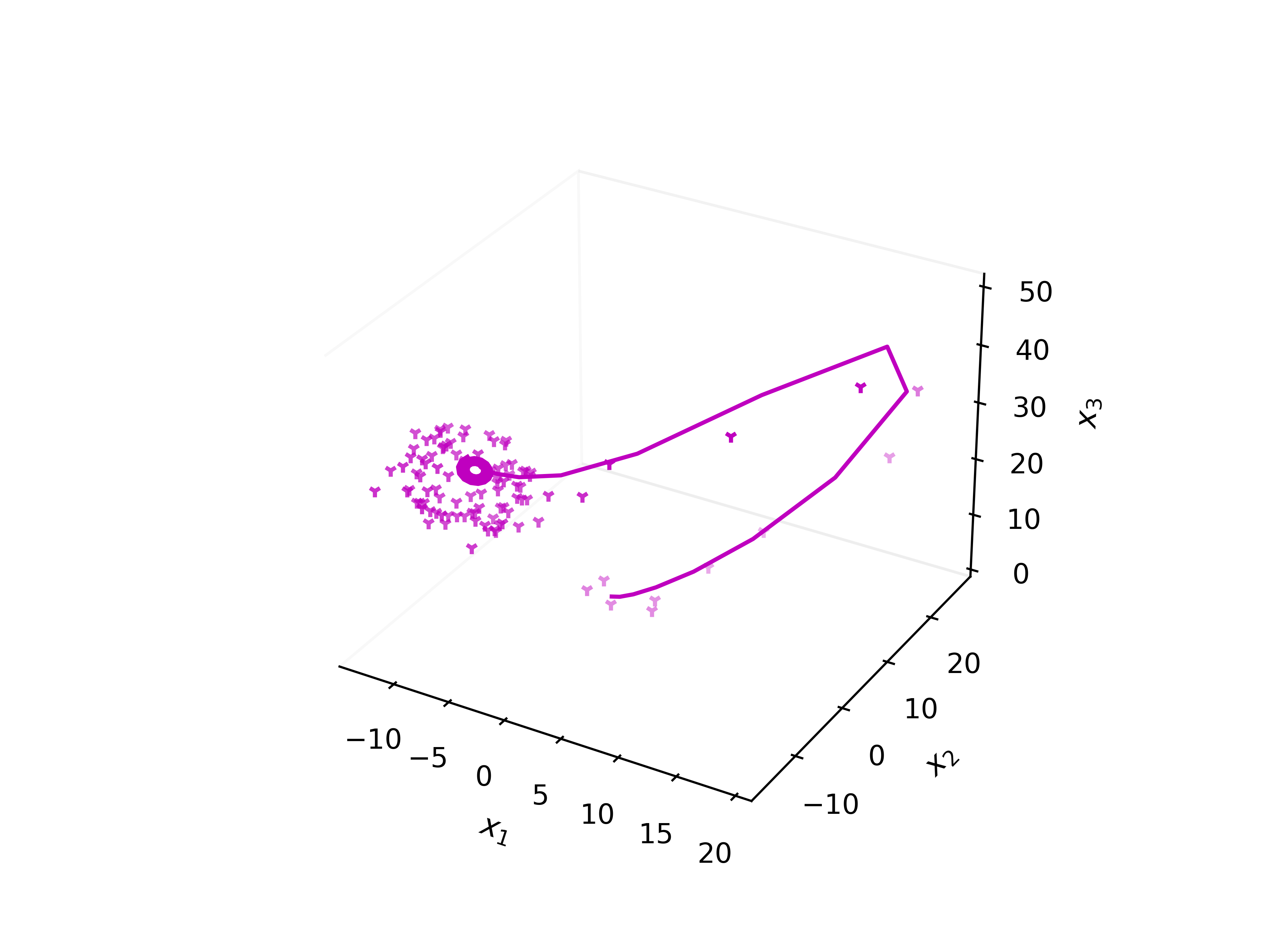}
\caption{fitting to data}
\end{subfigure}~
\begin{subfigure}[h!]{0.5\textwidth}
\includegraphics[width=\textwidth]{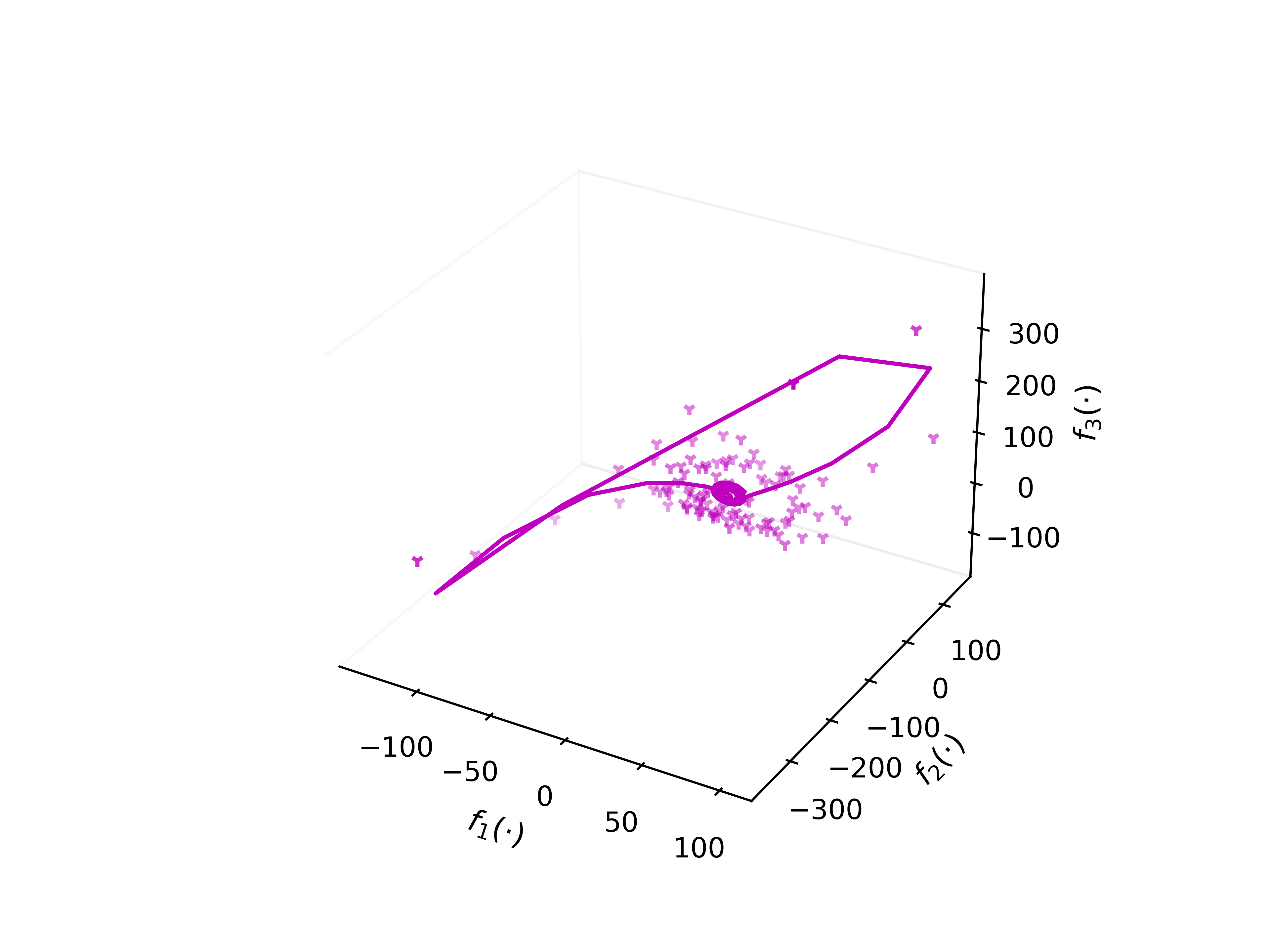}
\caption{fitting to derivative function}
\end{subfigure}
\caption{Lorenz's system example NLS method: marked plots are original data, solid lines are estimated data by using constrained NLS method}\label{fit_NLS_Lorenz}
\end{figure}

\begin{table}\centering
\caption{Error analysis for Lorenz's system example using proposed method}\label{EA_second_example_NSL_method}
\begin{tabular}{|c|c|c|c|c|c|}
\hline
variable&bias&MAPE&MAE&RMSE&R$^2$\\ \hline
$x_1$&-0.5613&0.2924&1.8619&2.2451&0.8609\\
$x_2$&-0.3759&0.3940&2.3256&2.8221&0.8260\\
$x_3$&-2.1314&0.1638&3.5015&4.2435&0.6758\\
$f_1(\cdot)$&1.5081&2.1949&19.6961&25.3391&0.5510\\
$f_2(\cdot)$&1.8151&1.2384&26.3919&32.8529&0.6511\\
$f_3(\cdot)$&-0.9230&1.3472&30.2180&39.2456&0.6421\\
\hline
\end{tabular}
\end{table}

\subsection{Fitting an activator-inhibitor system}
In this example, let an activator-inhibitor system data set $(t_d,x^d_k),\; d=1,\cdots, N\; k=1,2$ be fitted to the following equation (\ref{fe03}) 
\begin{equation}\label{fe03}
\begin{split}
\frac{dx_1}{dt}= &f_1(t,x,a_l)=\frac{1+a_1x^2_1}{1+x^2_1+a_2x_2}-x_1,\\
\frac{dx_2}{dt}= &f_2(t,x,a_l)=a_3(a_4x_1+x_0-x_2),\\
x_0= &0,
\end{split}
\end{equation}
where $a_l, l=1,\cdots,5$ are basal parameters of activator-inhibitor, which are to be estimated based on the available data $(t_d,x^d_k)$ and using the algorithms \ref{algorithm} and \ref{algorithm2}. Clearly, this example contains nonlinear in both parameters and states. \\
\subsubsection{Implementation NR method}\label{NRapply}
For estimating these basal parameters $a$, the vector $E(t,x,a_i)$ is presented as
\begin{equation}
E(t,x,a_i)=\begin{bmatrix}
f_1(t_d,x^d,a_i)-x'_1(t_d)\\
f_2(t_d,x^d,a_i)-x'_2(t_d)\\
\vdots
\end{bmatrix},\quad d=1,\cdots, N-1,
\end{equation}
and updating for each searching iteration. Similarly, the Jacobian matrix is obtained from the derivative of the function with respect to search basal parameters $a_i$ , such as
\begin{equation}\resizebox{0.9\hsize}{!}{$
\nabla_aF(t,x,a_i)=\begin{bmatrix}
\frac{(x^d_1)^2}{1+(x^d_1)^2+a^i_2x^d_2}&-\frac{x^d_2(1+a^i_1x^2_1)}{(1+(x^d_1)^2+a^i_2x^d_2)^2}&0&0\\
0&0&a^i_4x^d_1-x^d_2&a^i_3x^d_1\\
\vdots&\vdots&\vdots&\vdots
\end{bmatrix},\quad d=1,\cdots, N-1$}.
\end{equation}
The Jacobian matrix contains the  searched parameters. Thus, the Jacobian matrix is updated in each searching iteration.    
Here, using the proposed algorithm \ref{algorithm}, the searching basal parameters are updated by applying the same equation (\ref{NR03}). \\
\subsubsection{Implementation GD method}
In the application of the GD method, updated matrices $G(t,x,a_i)=E(t,x,a_i)$ and $\nabla_a G(t,x,a_i)=\nabla_a F(t,x,a_i)$ from this example are taking similar to previous sub sub section \ref{NRapply}. Also, the searching $a_i$ is updated following equation (\ref{GD02}) and algorithm \ref{algorithm2} until condition satisfied.  
\subsubsection{Numerical implementation}
In this activator-inhibitor example of system of ODEs (\ref{fe03}), the data are generated based on basal parameters $
a=[
2\;3\;0.1\;0.4
]^\top
$ numerically with initial condition $x_0=[0.1\;2]^\top$. Also, in this example, the data are added noise. Both methods are initialize with guessed parameters $a_0$  presented in Table \ref{TableIni03}. From these initial parameters,   the derivative vector $E(t,x,a_i)$ and Jacobian matrix $\nabla_aF(t,x,a_i)$ are initially calculated and updated for each iteration $i$. Both algorithms converge very fast which is less than 1 second, except for GD method with second and fifth initial guess. Even having computational expensive, NR method converges to optimal parameters with only less than 11 iterations, however, GD method converges to it's optimal parameter more than 400 iterations. 

\begin{table}\centering
\caption{Initial and convergent parameters and iteration - CPU time of activator-inhibitor example}\label{TableIni03}\footnotesize
\begin{tabular}{|c|c|c|c|c|c|}
\hline
Initial guess&$\begin{bmatrix}
1\\2\\1\\2
\end{bmatrix}$&$\begin{bmatrix}
10\\0\\3\\0.1
\end{bmatrix}$&$\begin{bmatrix}
0\\0\\-10\\0
\end{bmatrix}$&$\begin{bmatrix}
-1\\1\\-10\\9
\end{bmatrix}$&$\begin{bmatrix}
-10\\11\\12\\13
\end{bmatrix}$\\
\hline
\multirow{3}{*}{NR}&6-iter &9-iter&9-iter&7-iter&11-iter\\
&0.0089 s&0.0155 s&0.0098  s&0.0091 s&0.0115 s\\
&\multicolumn{5}{|c|}{$\begin{bmatrix}
2.0117& 3.02825& 0.0948& 0.3737
\end{bmatrix}^\top$}\\
\hline
\multirow{3}{*}{GD}&1210-iter &3897-iter&799-iter&462-iter&3865-iter\\
&0.6407 s&1.9940 s&0.4111 s &2.0562 s&0.2417 s\\
& \multicolumn{5}{|c|}{$\begin{bmatrix}
2.0117& 3.02825& 0.0948& 0.3737
\end{bmatrix}^\top$}\\
\hline
\end{tabular}
\end{table}

By applying the proposed algorithms \ref{algorithm} and \ref{algorithm2}, the optimal parameters are obtained as shown in Table \ref{TableIni03}. 
From these optimal estimated parameters, solution to equation (\ref{fe03}) is obtained numerically. The solution (solid line) and data (tri-right) are plotted in figure \ref{fit_Activator}. It can be noticed that the states are very well fitted to the generated data. As the scattered data points in figure \ref{fit_Activator}.b for derivative function, it is due to the distance between point in respective axis has big different.  
\begin{figure}
\centering
\begin{subfigure}[h!]{0.5\textwidth}
\includegraphics[width=\textwidth]{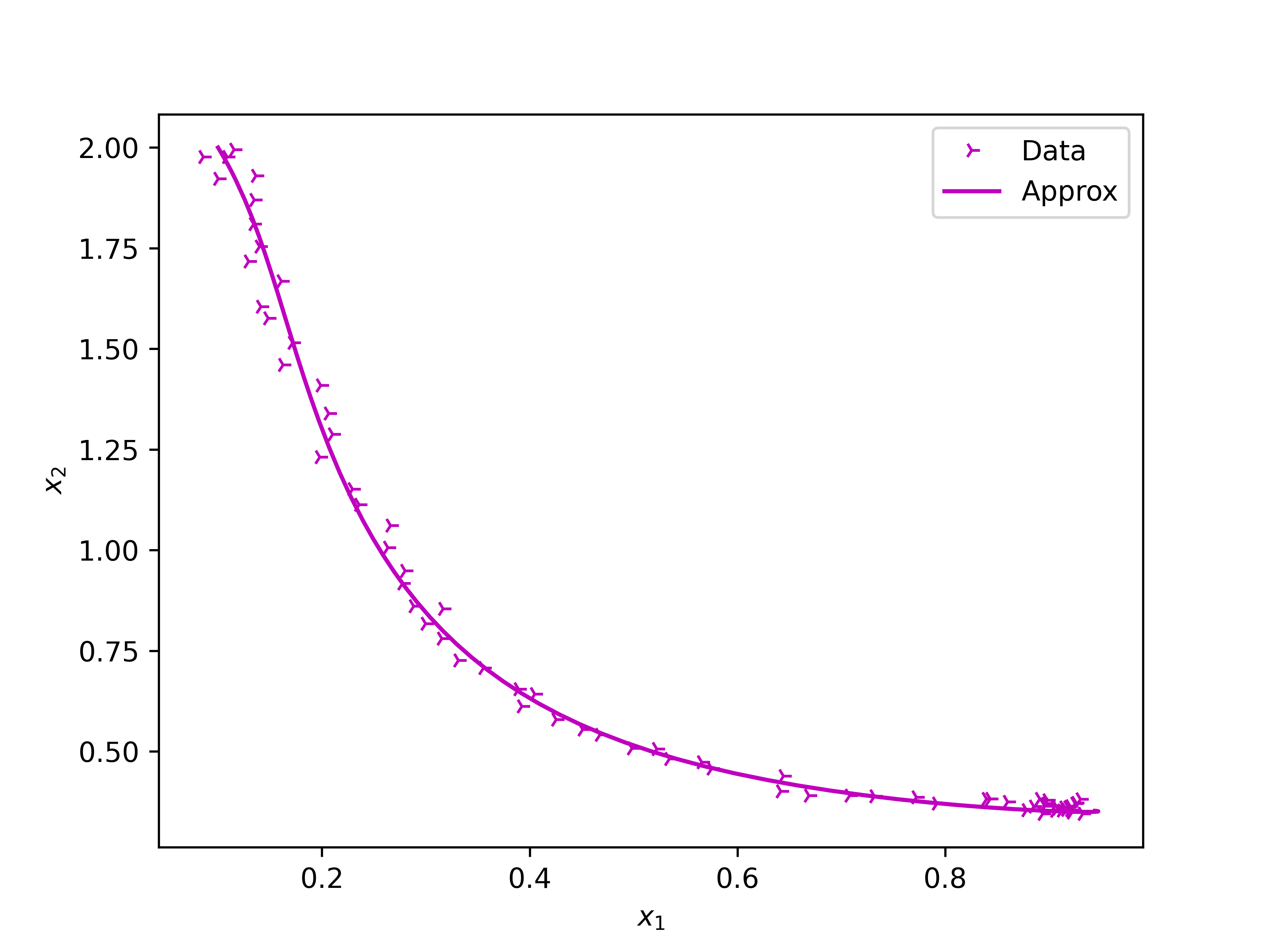}
\caption{fitting to data using NR method}
\end{subfigure}~
\begin{subfigure}[h!]{0.5\textwidth}
\includegraphics[width=\textwidth]{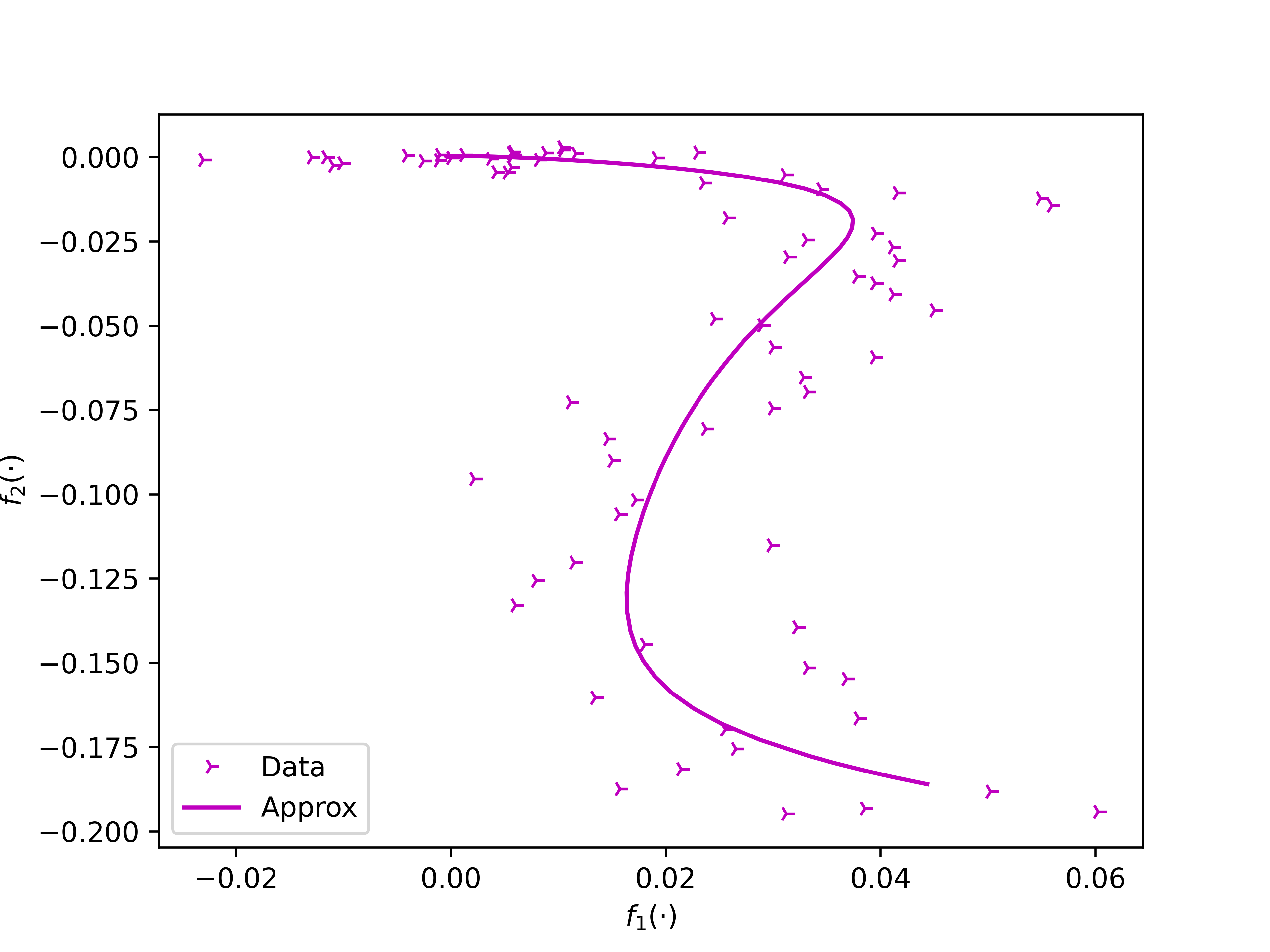}
\caption{fitting to derivative function using NR method}
\end{subfigure}\\
\begin{subfigure}[h!]{0.5\textwidth}
\includegraphics[width=\textwidth]{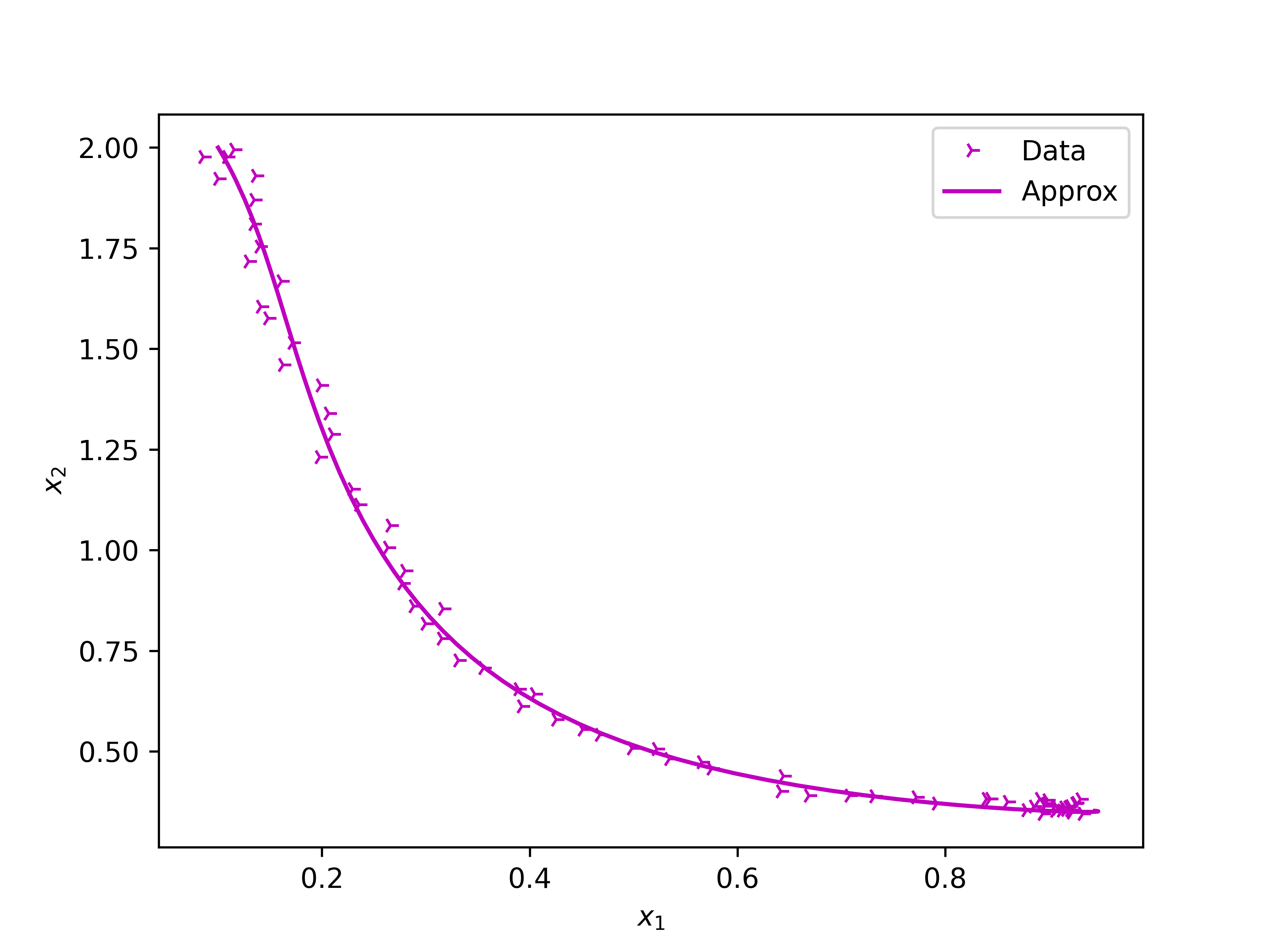}
\caption{fitting to data using GD method}
\end{subfigure}~
\begin{subfigure}[h!]{0.5\textwidth}
\includegraphics[width=\textwidth]{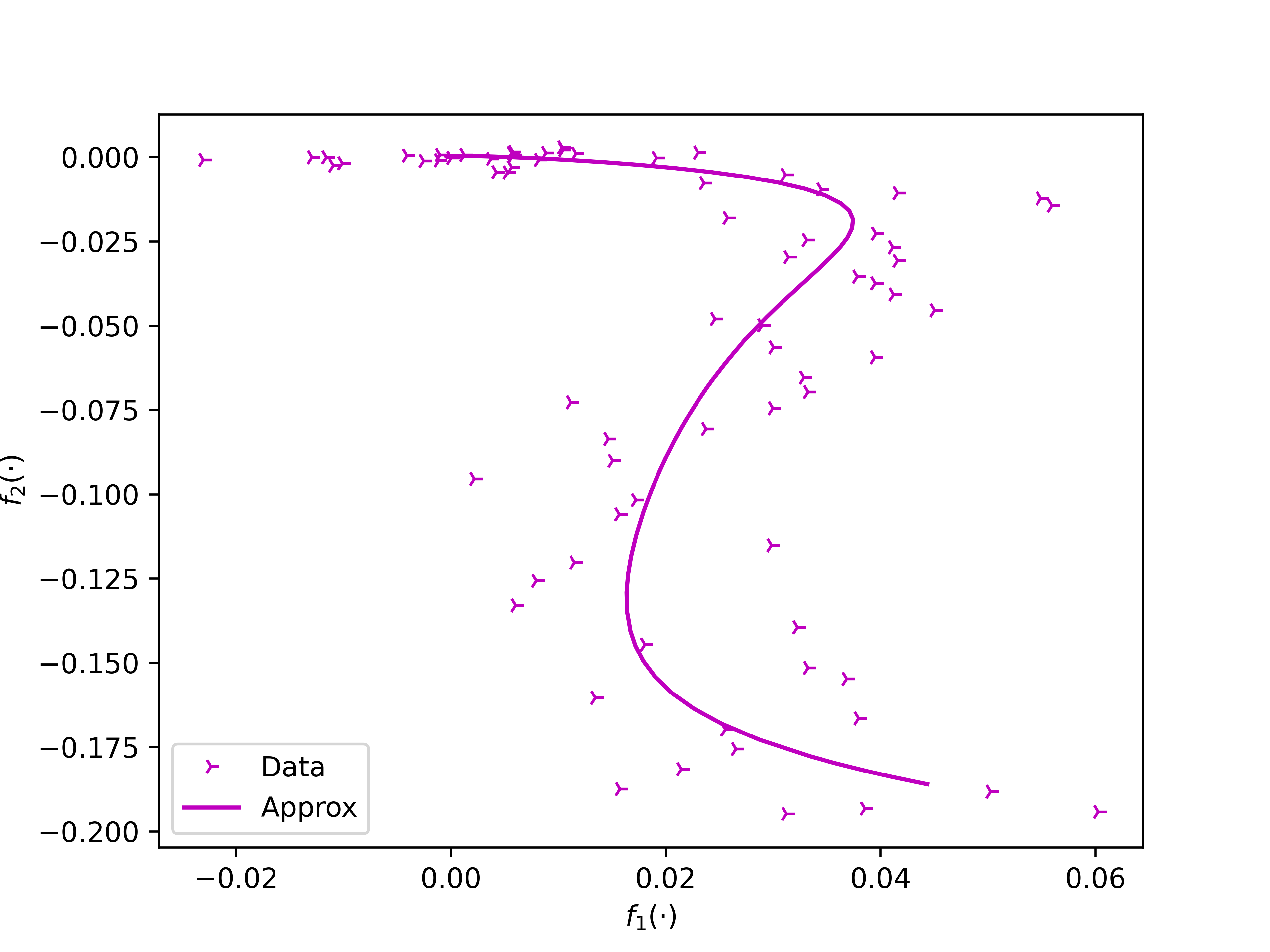}
\caption{fitting to derivative function using GD method}
\end{subfigure}
\caption{Activator-inhibitor example implementation: scatter plots are original data, solid lines are estimated data}\label{fit_Activator}
\end{figure}
The proposed algorithm \ref{algorithm} is 
well fitted of the activator-inhibitor data to the estimated parameters based data generation. It can be seen in Table \ref{EA_Third_example_proposed_method} for error analysis. All analysis show that only for derivative function $f_1(\cdot)$ has MAPE more than 3, and coefficient of determination R$^2$ less than 0.7. However, other analysis precision values gave very well fitting. 

\begin{table}\centering
\caption{Error analysis for activator-inhibitor example using both proposed algorithms}\label{EA_Third_example_proposed_method}
\begin{tabular}{|c|c|c|c|c|c|}
\hline
variable&bias&MAPE&MAE&RMSE&R$^2$\\ \hline
$x_1$&0.0083&0.0603&0.0257&0.0307&0.9903\\
$x_2$&-0.0199&0.0443&0.0285&0.0357&0.9957\\
$f_1(\cdot)$&1.6785e-04&3.0247&0.0089&0.0110&0.6228\\
$f_2(\cdot)$&4.1262e-04&0.5862&0.0038&0.0047&0.9949\\
\hline
\end{tabular}
\end{table}
\subsubsection{Comparison to nonlinear LS method}
The evaluation of the proposed methodology and algorithm \ref{algorithm} is also compared to constrained NLS method for the activator-inhibitor example of activator-inhibitor. For this constrained NLS method, the searching parameters are bounded as 
\begin{equation}
0\le a \le 4.
\end{equation}
It is clear that the solution should be inside the bound as the original parameter is laying in between. Again, the same initial guessed parameter $a_0$ and the same initial condition $x_0$ are used in searching the parameter. The constrained NLS method ends up in a solution for 
\begin{equation}
\bar{a}=\begin{bmatrix}
1.8599&
 2.5681&
 0.0521&
 0.2358
\end{bmatrix}.
\end{equation}
The obtained parameters have error to original parameters for $a_1$ is around 7\%, $a_2$ is around 14.4\%, $a_3$ is around 47.9\%, $a_4$ is around 41.05\%. While, the proposed methodologies and algorithms  obtains the parameters closer to original parameters. This clearly is that the proposed methodologies and algorithms \ref{algorithm} and \ref{algorithm2} are outperfomed of the constrained NLS method. \\
Also, it can be seen on the parameters obtained from the fitting of constrained NLS method to the data in figure \ref{fit_Activator_NLS}. It can be observed that the fitting is far from the data for both states and derivative functions. The coefficients of fitting such bias, MAPE, MAE, RMSE (from Table \ref{EA_Third_example_NLS_method}) are farther from 0, while the proposed method has such coefficients closer to 0 (Table \ref{EA_Third_example_proposed_method}). It can be concluded that the proposed methodologies and algorithms \ref{algorithm} and \ref{algorithm2} is exceeded the constrained NLS method. 

\begin{figure}
\centering
\begin{subfigure}[h!]{0.5\textwidth}
\includegraphics[width=\textwidth]{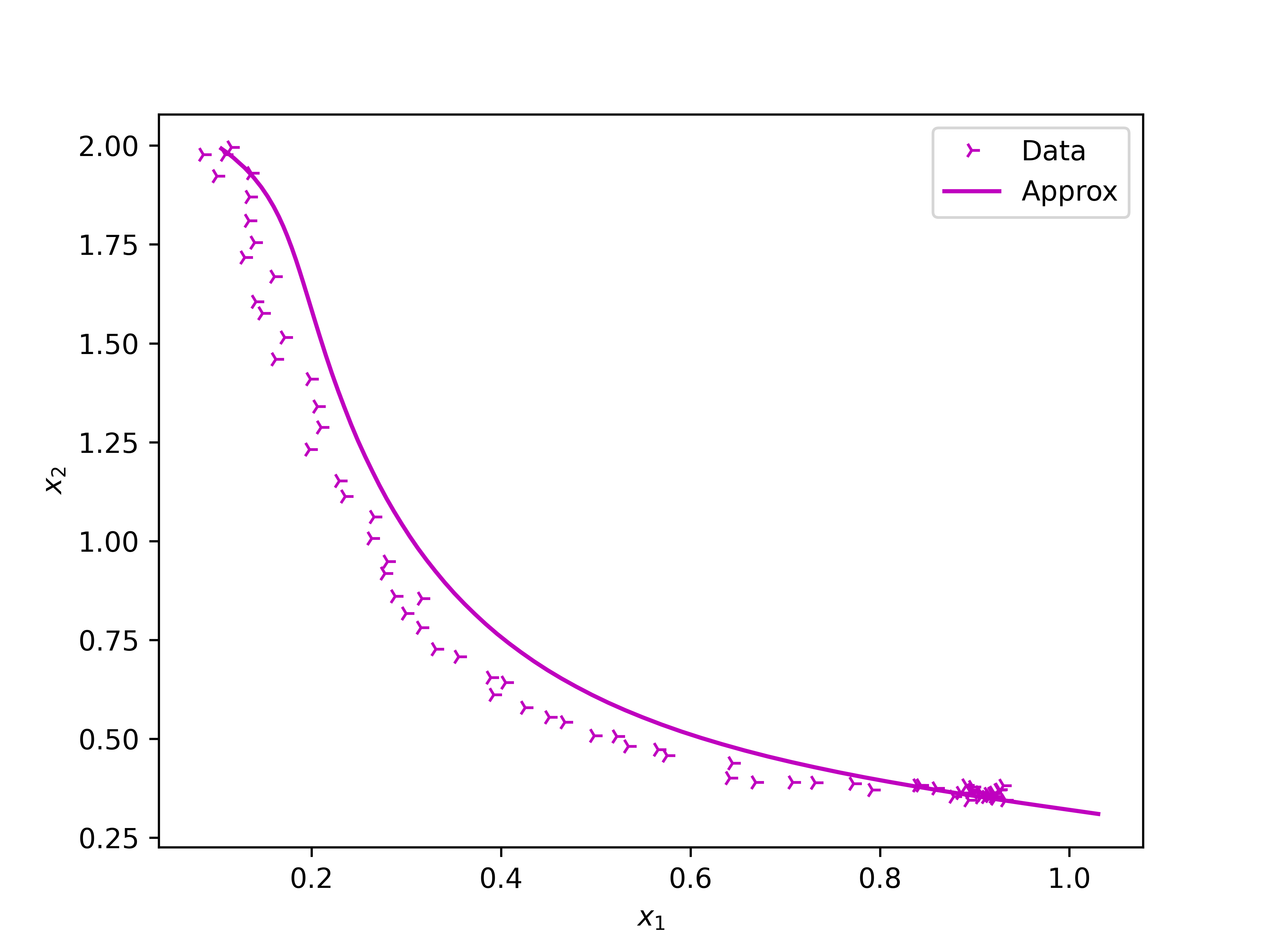}
\caption{fitting to data}
\end{subfigure}~
\begin{subfigure}[h!]{0.5\textwidth}
\includegraphics[width=\textwidth]{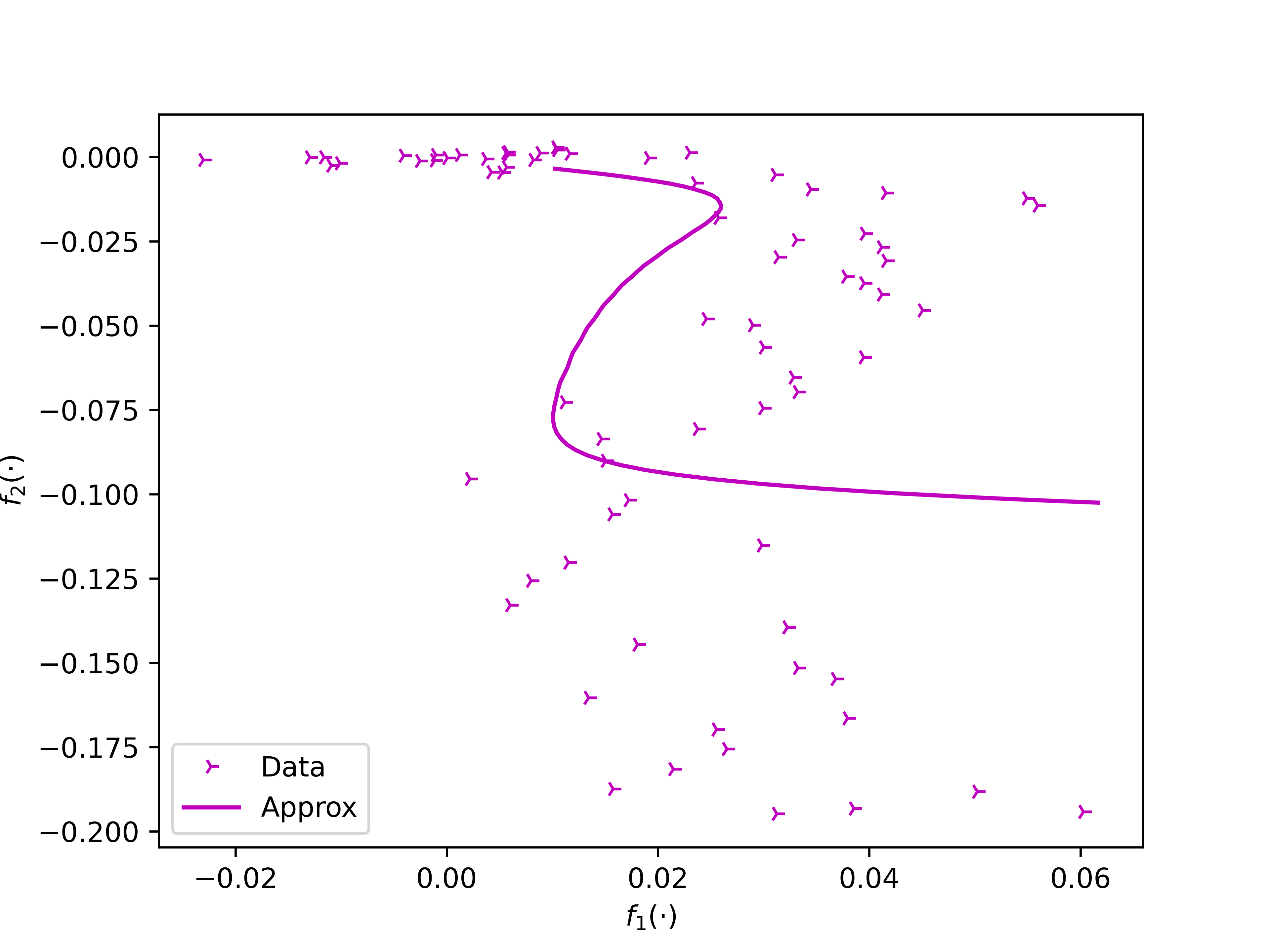}
\caption{fitting to derivative function}
\end{subfigure}
\caption{Activator-inhibitor example: marked plots are original data, solid lines are estimated data by using NLS}\label{fit_Activator_NLS}
\end{figure}

\begin{table}\centering
\caption{Error analysis for activator-inhibitor example using NLS method}\label{EA_Third_example_NLS_method}
\begin{tabular}{|c|c|c|c|c|c|}
\hline
variable&bias&MAPE&MAE&RMSE&R$^2$\\ \hline
$x_1$&0.1150&0.2125&0.1275&0.1723&0.6963\\
$x_2$&-0.2376&0.3971&0.2435&0.2927&0.7120\\
$f_1(\cdot)$&-5.7007e-04&7.0178&0.0173&0.0202&0.2692\\
$f_2(\cdot)$&-0.0101&8.5119&0.0266&0.0369&0.6857\\
\hline
\end{tabular}
\end{table}
\section{Conclusion}\label{conclusion}
In this paper, we proposed novel methodologies for parameter estimation in systems of ODEs using the NR and GD methods. By leveraging discrete derivatives and Taylor expansion, we presented a direct approach to estimate parameters in both linear and nonlinear ODE models. Additionally, we expanded stochastic versions — Stochastic Newton-Raphson (SNR) and Stochastic Gradient Descent (SGD)— to handle large-scale systems more efficiently, reducing computational cost without compromising accuracy.
Through numerical examples, including population dynamics, chaotic Lorenz systems, and activator-inhibitor models, the proposed methods were shown to outperform the traditional NLS method. The SNR method demonstrated rapid convergence and high accuracy, while the SGD method proved robust in managing complex, chaotic systems, though at the cost of higher iteration counts. The stochastic versions further enhanced the scalability of the methods, making them suitable for big data applications.
Overall, the proposed approaches provide flexible and efficient solutions for parameter estimation in ODE systems, offering significant improvements in both accuracy and computational performance compared to NLS. These methods are especially valuable for complex, nonlinear models and large datasets, making them applicable to a wide range of scientific and engineering problems. 

\bibliographystyle{unsrtnat}
\bibliography{Fit_Ref}  

\end{document}